\documentclass[11pt]{article}
\usepackage{amsmath,amsthm}
\usepackage{appendix}
\usepackage{amssymb}
\usepackage{graphicx}
\usepackage{epsfig}
\usepackage{color}
\usepackage[normalem]{ulem}
\usepackage{subfig}
\usepackage{newfloat}
\usepackage{algorithm}
\usepackage{algorithmic}
\usepackage{caption}
\usepackage{booktabs}
\usepackage[detect-all]{siunitx}
\usepackage{etoolbox}
\usepackage{multirow}
\usepackage{mathtools}
\usepackage{tabularx}
\usepackage{array}
\usepackage{enumerate} 
\usepackage{comment} 
\usepackage{url}
\usepackage{hyperref}
\usepackage[a4paper,tmargin=1.5in,bmargin=1.5in]{geometry}


\newtheorem{theo}{Theorem}[section] 

\newtheorem{rema}[theo]{Remark}

\newcommand{\beq}{\begin{equation}}
\newcommand{\eeq}{\end{equation}}
\newcommand{\beqs}{\begin{equation*}}
\newcommand{\eeqs}{\end{equation*}}

\newcommand{\C}{\mathbb C}
\newcommand{\R}{\mathbb R}
\newcommand{\bigO}{{\cal O}}
\newcommand{\imagunit}{{\bf i}}

\newcommand{\hinfsym}{\mathcal{H}}
\newcommand{\linfsym}{\mathcal{L}}
\newcommand{\hinfcal}{{\hinfsym_\infty}}
\newcommand{\Hinf}{{\hinfsym_\infty}}
\newcommand{\Linf}{{\linfsym_\infty}}

\newcommand{\Cmn}[2]{\C^{{#1}\times{#2}}}

\renewcommand\Re{\operatorname{Re}}
\renewcommand\Im{\operatorname{Im}}

\newcommand{\Real}[1]{\Re{\left({#1}\right)}}

\DeclareMathOperator*{\argmax}{arg\,max}

\DeclareFloatingEnvironment[placement=!th]{algfloat}
\newcommand{\algnote}[1]{\footnotesize \sc{Note: \it#1 } }


\def\compleib{$COMPL_eib$}
\def\hifoo{{\sc hifoo}}
\def\hifood{{\sc hifoo}d}

\def\ntfc{g_\mathrm{c}}
\def\ntfd{g_\mathrm{d}}
\def\tfc{G_\mathrm{c}}
\def\tfd{G_\mathrm{d}}
\def\eigderivmat{H}
\def\nopt{\phi}

\def\Mc{\mathcal{M}_\gamma}
\def\Nc{\mathcal{N}}
\def\Md{\mathcal{S}_\gamma}
\def\Nd{\mathcal{T}_\gamma}

\def\MNpencont{(\Mc,\Nc)}
\def\MNpendisc{(\Md,\Nd)}
\def\MNcontnp{\Mc,\Nc}

\newcommand{\tfs}[1]{C\left(#1 E -A\right)^{-1}B + D}
\def\eitheta{e^{\imagunit \theta}}
\def\eithetaconj{e^{-\imagunit \theta}}

\newcommand{\mytilde}{\raise.17ex\hbox{$\scriptstyle\mathtt{\sim}$}}
\newcommand{\matlab}{{MATLAB}}

\hyphenation{pseudo-spec-trum}
\title{Faster and more accurate computation of the $\Hinf$ norm via optimization}
\author{
Peter Benner\thanks{
Max Planck Institute for Dynamics of Complex Technical Systems, Magdeburg, 39106 Germany (benner@mpi-magdeburg.mpg.de).}
\and 
Tim Mitchell\thanks{
Max Planck Institute for Dynamics of Complex Technical Systems, Magdeburg, 39106 Germany (mitchell@mpi-magdeburg.mpg.de).}
\date{July 18, 2018}
}
\begin{document} 
\maketitle
\begin{abstract}
In this paper, we propose an improved method for computing the $\Hinf$ norm of 
linear dynamical systems that results in a code that is often several times faster 
than existing methods.  
By using standard optimization tools to rebalance 
the work load of the standard algorithm due to Boyd, Bala\-krish\-nan, Bru\-insma, and 
Stein\-buch, we aim to minimize
the number of expensive eigenvalue computations that must be performed.  
Unlike the standard algorithm, our modified approach can also 
calculate the $\Hinf$ norm to full precision with little extra work, and also offers more opportunity 
to further accelerate its performance via parallelization. 
Finally, 
we demonstrate that the local optimization we have employed 
to speed up the standard globally-convergent algorithm
can also be an effective strategy on its own
for approximating the $\Hinf$ norm of large-scale systems.
\end{abstract}

\section{Introduction}
\label{sec:intro}

Consider the continuous-time linear dynamical system 
\begin{subequations}
\label{eq:lti_cont}
\begin{align}
E\dot{x} &=  Ax + Bu \\
y &=  Cx + Du,
\end{align}
\end{subequations}
where $A \in \Cmn{n}{n}$, $B \in \Cmn{n}{m}$, $C \in \Cmn{p}{n}$, $D \in \Cmn{p}{m}$, and $E \in \Cmn{n}{n}$.
The system defined by \eqref{eq:lti_cont} arises in many engineering applications 
and as a consequence, there has been a strong motivation for fast methods to compute properties 
that measure the sensitivity of the system or its robustness to noise.  Specifically, 
a quantity of great interest is the $\Hinf$ norm, which is defined as 
\beq
	\label{eq:hinf_cont}
	\|G\|_{\hinfcal} \coloneqq \sup_{\omega \in \R} \| G(\imagunit \omega)\|_2,
\eeq
where
\beq
	G(\lambda) = C(\lambda E - A)^{-1} B + D 
\eeq
is the associated \emph{transfer function} of the system given by \eqref{eq:lti_cont}.  
The $\Hinf$ norm measures the maximum sensitivity of the system; 
in other words, the higher the value of the $\Hinf$ norm, the less robust the system is,
an intuitive interpretation considering that the $\Hinf$ norm is in fact the reciprocal
of the \emph{complex stability radius}, which itself is a generalization of the \emph{distance to instability} \cite[Section 5.3]{HinP05}.
The $\Hinf$ norm is also a key metric for assessing the quality of reduced-order
models that attempt to capture/mimic the dynamical behavior of large-scale systems, 
see, e.g., \cite{morAnt05,morBenCOW17}.
Before continuing, 
as various matrix pencils of the form $\lambda B - A$ will feature frequently in this work,
we use notation $(A,B)$ to abbreviate them.

When $E = I$, the $\Hinf$ norm is finite as long as $A$ is stable, whereas an 
unstable system would be considered infinitely sensitive.  If $E\ne I$, then 
\eqref{eq:lti_cont} is called a \emph{descriptor system}.
Assuming that $E$ is singular but $(A,E)$ is regular and at most index 1, then \eqref{eq:hinf_cont} still 
yields a finite value, provided that all the controllable and observable eigenvalues of $(A,E)$ 
are finite and in the open left half plane, where for $\lambda$ an eigenvalue of $(A,E)$ 
with right and left eigenvectors $x$ and $y$, $\lambda$ is considered 
\emph{uncontrollable} if $B^*y = 0$ and \emph{unobservable}
if $Cx=0$.  However, the focus of this paper is not about detecting when \eqref{eq:hinf_cont}
is infinite or finite, but to introduce an improved method for computing the $\Hinf$ norm when it is finite.
Thus, for conciseness in presenting our improved method, we will assume in this paper that
any system provided to an algorithm has a finite $\Hinf$ norm, as checking whether it is infinite can be considered a preprocessing step.\footnote{We note that while our proposed improvements are also directly applicable for computing the 
$\Linf$ norm, we will restrict the discussion here to just the $\Hinf$ norm for brevity.}

While the first algorithms \cite{BoyBK89, BoyB90,BruS90} for computing the $\Hinf$ norm date back to nearly 30 years ago,
there has been continued interest in improved methods, particularly as the state-of-art methods remain quite expensive 
with respective to their dimension $n$, meaning that computing the $\Hinf$ norm is generally only possible for rather small-dimensional systems.  In 1998, \cite{GenVV98} proposed an interpolation refinement to the existing algorithm of \cite{BoyB90,BruS90} to accelerate its rate of convergence.  
In the following year, for the special case of the distance to instability where $B=C=E=I$ and $D=0$, 
\cite{HeW99} used an inverse iteration to successively obtain increasingly better locally optimal approximations as 
a way of reducing the number of expensive Hamiltonian eigenvalue decompositions;
as we will discuss in Section~\ref{sec:hybrid_opt}, this method shares some similarity with the approach we propose here.
More recently, in 2011, \cite{BelP11} presented an entirely
different approach to computing the $\Hinf$ norm, by finding isolated common zeros of two certain bivariate polynomials.  
While they showed that their method was much faster than an implementation of \cite{BoyB90,BruS90}
on two SISO examples (single-input, single-output, that is, $m=p=1$), more comprehensive benchmarking does not appear to have been done yet.  Shortly thereafter, \cite{BenSV12} extended the now standard algorithm of 
\cite{BoyB90,BruS90} to descriptor systems.  
There also has been a very recent surge of interest in efficient $\Hinf$ norm approximation methods for large-scale systems.  
These methods fall into two 
broad categories: those that are applicable for descriptor systems with possibly singular $E$ matrices but require solving linear systems \cite{BenV14,FreSV14,AliBMetal17} and those that don't solve linear systems but require that $E=I$ or that $E$ is at least cheaply inverted \cite{GugGO13,MitO16}.  
Our contribution in this paper is twofold.
First, we improve upon the exact algorithms of \cite{BoyB90,BruS90,GenVV98} to not only 
compute the $\Hinf$ norm significantly faster but also obtain its value to machine precision with negligible 
additional expense (a notable difference compared to these earlier methods).
This is accomplished by incorporating local optimization techniques within these algorithms,
a change that also makes our new approach more amenable to additional acceleration via parallelization.
Second, that standard local optimization 
can even be used on its own 
to efficiently obtain locally optimal approximations to the 
$\Hinf$ norm of large-scale systems,
a simple and direct approach that has surprisingly not yet been considered 
and is even embarrassingly parallelizable.

The paper is organized as follows. In Sections~\ref{sec:bbbs} and \ref{sec:costs}, we describe 
the standard algorithms for computing the $\Hinf$ norm and then give an overview of their computational costs.
In Section~\ref{sec:hybrid_opt}, we introduce our new approach to computing the $\Hinf$ norm via leveraging
local optimization techniques.  Section~\ref{sec:discrete} describes how the results and algorithms are adapted for
discrete-time problems.  We present numerical results in Section~\ref{sec:numerical} 
for both continuous- and discrete-time problems.
Section~\ref{sec:hinf_approx} provides the additional experiments 
demonstrating how 
local optimization can also be a viable strategy for approximating the $\Hinf$ norm of 
large-scale systems.
Finally, in Section~\ref{sec:parallel} we discuss how significant speedups 
can be obtained for some problems when using parallel processing with our new approach, in contrast to 
the standard algorithms, which benefit very little from multiple cores.
Concluding remarks are given in Section~\ref{sec:wrapup}.

\section{The standard algorithm for computing the $\Hinf$ norm} 
\label{sec:bbbs}
We begin by presenting a key theorem relating the singular values of the transfer function 
to purely imaginary eigenvalues of an associated matrix pencil.  For the case of simple ODEs, where
$B=C=E=I$ and $D=0$, the result goes back to \cite[Theorem 1]{Bye88}, and was first extended
to linear dynamical systems with input and output with $E=I$ in \cite[Theorem 1]{BoyBK89}, and 
then most recently generalized to systems where $E\ne I$ in \cite[Theorem 1]{BenSV12}.
We state the theorem without the proof, since it is readily available in \cite{BenSV12}.

\begin{theo}
\label{thm:eigsing_cont}
Let $\lambda E - A$ be regular with no finite eigenvalues on the imaginary axis, $\gamma > 0$ not a singular value of $D$, and $\omega \in \R$.
Consider the matrix pencil $\MNpencont$, where
\beq
	\label{eq:MNpencil_cont}
	\Mc \coloneqq \begin{bmatrix} 	A - BR^{-1}D^*C 	& -\gamma BR^{-1}B^* \\ 
						\gamma C^*S^{-1}C 	& -(A - BR^{-1}D^*C)^* \end{bmatrix} 
	~\text{and}~ 
	\Nc \coloneqq  \begin{bmatrix} E & 0\\ 0 & E^*\end{bmatrix} 
	\eeq
and $R = D^*D - \gamma^2 I$ and $S = DD^* - \gamma^2 I$.
Then $\imagunit \omega$ is an eigenvalue of matrix pencil $\MNpencont$ if and only if
$\gamma$ is a singular value of $G(\imagunit \omega)$. 
\end{theo}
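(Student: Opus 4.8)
The plan is to derive the pencil $\MNpencont$ directly from the singular-vector equations of $G(\imagunit\omega)$, eliminating the input/output vectors to leave a generalized eigenvalue problem in a pair of state vectors. Since $\gamma > 0$, the number $\gamma$ is a singular value of $G(\imagunit\omega)$ exactly when there are unit vectors $u \in \C^p$, $v \in \C^m$ with $G(\imagunit\omega) v = \gamma u$ and $G(\imagunit\omega)^* u = \gamma v$. First I would set $x := (\imagunit\omega E - A)^{-1} B v$ and let $z$ solve $(\imagunit\omega E - A)^* z = C^* u$, both well defined because the regularity and imaginary-axis hypotheses make $\imagunit\omega E - A$ invertible. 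Writing out $G(\imagunit\omega) = C(\imagunit\omega E - A)^{-1}B + D$ and using $(\imagunit\omega E - A)^* = -\imagunit\omega E^* - A^*$ turns the two singular-vector equations into the four relations
\begin{align*}
\imagunit\omega E x &= A x + B v, & C x + D v &= \gamma u, \\
-\imagunit\omega E^* z &= A^* z + C^* u, & B^* z + D^* u &= \gamma v,
\end{align*}
in which the left column is just the definitions of $x$ and $z$ rearranged and the right column records the two singular-value identities.

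Next I would eliminate $u$ and $v$. The two right-hand relations constitute a linear system for $(u,v)$ whose coefficient matrix is $\left[\begin{smallmatrix}\gamma I & -D \\ -D^* & \gamma I\end{smallmatrix}\right]$, and its determinant is nonzero precisely because $\gamma$ is not a singular value of $D$ (this is exactly what makes $R$ and $S$ invertible). Solving gives $v = -R^{-1}D^*C x - \gamma R^{-1}B^* z$ and $u = -\gamma S^{-1}C x - S^{-1}D B^* z$. Substituting these into the two left-hand (state) relations, and using the identity $S^{-1}D = D R^{-1}$ — which follows at once from $S D = D R$, itself immediate from $R = D^*D - \gamma^2 I$ and $S = DD^* - \gamma^2 I$ — collapses the lower coupling term into $-(A - BR^{-1}D^*C)^* z$ and yields exactly
\[
\imagunit\omega \Nc \begin{bmatrix} x \\ z \end{bmatrix} = \Mc \begin{bmatrix} x \\ z \end{bmatrix},
\]
that is, $\imagunit\omega$ is an eigenvalue of $\MNpencont$ with eigenvector $\left[\begin{smallmatrix} x \\ z \end{smallmatrix}\right]$.

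Each step is reversible: starting from an eigenpair of $\MNpencont$ I would \emph{define} $u$ and $v$ by the two solved formulas, run the $2\times 2$ block system backward to recover the output equations, and read off the state equations from the two rows of the pencil, thereby reconstructing $G(\imagunit\omega)v = \gamma u$ and $G(\imagunit\omega)^* u = \gamma v$. I expect the only genuine subtlety — the main obstacle — to be the nondegeneracy bookkeeping in both directions, and the hypothesis that $\gamma$ is not a singular value of $D$ is exactly what resolves it. In the forward direction, if the constructed vector $\left[\begin{smallmatrix} x \\ z \end{smallmatrix}\right]$ vanished, the relations above would force $B v = 0$, $C^* u = 0$, $\gamma u = D v$, and $\gamma v = D^* u$, whence $D^*D v = \gamma^2 v$; a nonzero $v$ would make $\gamma$ a singular value of $D$, so $v = 0$ and then $u = 0$, contradicting that $u,v$ are unit vectors. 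In the reverse direction, the invertibility of $\imagunit\omega E - A$ shows that $u = v = 0$ would force $x = z = 0$, so the constructed $u,v$ cannot both vanish, and since $\gamma > 0$ they certify $\gamma$ as a genuine singular value of $G(\imagunit\omega)$.
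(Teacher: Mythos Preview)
Your proof is correct and follows essentially the same route as the paper: the paper does not prove Theorem~\ref{thm:eigsing_cont} directly (citing \cite{BenSV12}) but gives the analogous argument in full for the discrete-time Theorem~\ref{thm:eigsing_disc}, and your $x,z$ are exactly the paper's $q,s$ from \eqref{eq:qs_vecs}, with the same elimination of $u,v$ via the block inverse \eqref{eq:Dgamma_inv} and the same identity $S^{-1}D = DR^{-1}$. If anything, your nondegeneracy bookkeeping is more explicit than the paper's, which simply asserts ``noting that neither can be identically zero.''
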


Theorem~\ref{thm:eigsing_cont} immediately leads to an algorithm for computing the $\Hinf$ norm
based on computing the imaginary eigenvalues, if any, of the associated matrix pencil \eqref{eq:MNpencil_cont}.
For brevity in this section, we assume that $\max \|G(\imagunit \omega)\|_2$ is not attained at $\omega = \infty$, in which case
the $\Hinf$ norm would be $\|D\|_2$.  
 Evaluating the norm of the transfer function for any finite frequency along the imaginary axis immediately gives a lower bound to the $\Hinf$ norm 
while an upper bound can be obtained 
by successively increasing $\gamma$ until the matrix pencil given by \eqref{eq:MNpencil_cont} no longer has any purely imaginary eigenvalues.  Then, it is straightforward to compute the $\Hinf$ norm using bisection, as first proposed in \cite{BoyBK89} 
and was inspired by the breakthrough result of \cite{Bye88} for computing the 
\emph{distance to instability} (i.e. the reciprocal of the $\Hinf$ norm for the special case of $B=C=E=I$ and $D=0$).

As Theorem~\ref{thm:eigsing_cont} provides a way to calculate all the frequencies where $\|G(\imagunit \omega)\|_2 = \gamma$, it 
was shortly thereafter proposed in \cite{BoyB90, BruS90} that instead of computing an upper bound and then using bisection, 
the initial lower bound could be successively increased in a monotonic fashion to the value of the 
$\Hinf$ norm.  
For convenience, it will be helpful to establish the following notation for the transfer function and its 
largest singular value, both as parameters of frequency $\omega$:
\begin{align}
	\label{eq:tf_cont}
	\tfc(\omega) {}& \coloneqq G(\imagunit \omega) \\
	\label{eq:ntf_cont}
	\ntfc(\omega) {}& \coloneqq \| G(\imagunit \omega) \|_2 = \| \tfc(\omega) \|_2.
\end{align}
Let $\{ \omega_1,\ldots,\omega_l\}$ be the set of imaginary parts of the purely 
imaginary eigenvalues of \eqref{eq:MNpencil_cont} for the initial value $\gamma$, sorted in increasing order.  
Considering the 
intervals $I_k = [\omega_k,\omega_{k+1}]$, \cite{BruS90} proposed increasing $\gamma$ via:
\beq
	\label{eq:gamma_mp}
 	\gamma_\mathrm{mp} = \max \ntfc(\hat\omega_k) 
 	\qquad \text{where} \qquad 
	\hat\omega_k ~\text{are the midpoints of the intervals}~ I_k.
\eeq
Simultaneously and independently, a similar
algorithm was proposed by \cite{BoyB90}, with the additional results that 
(a) it was possible to calculate which intervals $I_k$ satisfied $\ntfc(\omega) \ge \gamma$ 
for all $\omega \in I_k$, thus reducing the number of evaluations of $\ntfc(\omega)$ needed
at every iteration, and (b) this midpoint scheme actually had a local quadratic rate of convergence, 
greatly improving upon the linear rate of convergence of the earlier, bisection-based method.
This midpoint-based method, which we refer to as the BBBS algorithm for its authors Boyd, Bala\-krish\-nan, 
Bru\-insma, and Stein\-buch, is now considered the standard algorithm for computing the $\Hinf$ norm and
it is the algorithm implemented in the \matlab\ Robust Control Toolbox, e.g. routine \texttt{hinfnorm}.
Algorithm~\ref{alg:bbbs} provides a high-level pseudocode description for the standard BBBS algorithm while Figure~\ref{fig:bbbs_std} provides a corresponding pictorial description of how the method works.

\begin{algfloat}
\begin{algorithm}[H]
\floatname{algorithm}{Algorithm}
\caption{The Standard BBBS Algorithm}
\label{alg:bbbs}
\begin{algorithmic}[1]
	\REQUIRE{  
		$A \in \Cmn{n}{n}$, $B \in \Cmn{n}{m}$, $C \in \Cmn{p}{n}$, $D \in \Cmn{p}{m}$, 
		$E \in \Cmn{n}{n}$ and $\omega_0 \in \R$.
	}
	\ENSURE{ 
		$\gamma = \| G \|_\hinfcal$ and $\omega$ such that 
		$\gamma = \ntfc(\omega)$.  
		\\ \quad
	}
	
	\STATE $\gamma = \ntfc(\omega_0)$
	\WHILE {not converged} 
		\STATE \COMMENT{Compute the intervals that lie under $\ntfc(\omega)$ using 
						eigenvalues of the pencil:}	
		\STATE Compute $\Lambda_\mathrm{I} =
				\{ \Im \lambda : \lambda \in \Lambda(\MNcontnp) ~\text{and}~ \Re \lambda = 0\}$.
		\STATE Index and sort $\Lambda_\mathrm{I} = \{\omega_1,...,\omega_l\}$ s.t. 
				$\omega_j \le \omega_{j+1}$.
		\STATE Form all intervals $I_k = [\omega_k, \omega_{k+1}]$ s.t.
			each interval at height $\gamma$ is below $\ntfc(\omega)$.
			\label{algline:bbbs_ints}		
		\STATE \COMMENT{Compute candidate frequencies of the level-set intervals $I_k$:}
		\STATE Compute midpoints $\hat\omega_k = 0.5(\omega_k +\omega_{k+1})$ for each interval $I_k$.
			\label{algline:bbbs_points}
		\STATE \COMMENT{Update to the highest gain evaluated at these candidate frequencies:}
		\STATE $\omega = \argmax \ntfc(\hat\omega_k)$.
		\STATE $\gamma = \ntfc(\omega)$.
	\ENDWHILE
\end{algorithmic}
\end{algorithm}
\algnote{
The quartically converging variant proposed by \cite{GenVV98} replaces the midpoints of $I_k$ 
with the maximizing frequencies of Hermite cubic interpolants, which are uniquely 
determined by interpolating the values of $\ntfc(\omega)$ and $\ntfc^\prime(\omega)$ 
at both endpoints of each interval $I_k$.
}
\end{algfloat}

\begin{figure}[!t]
\subfloat[The standard BBBS method]{
\includegraphics[scale=.37,trim={1cm 0 2cm 0},clip]{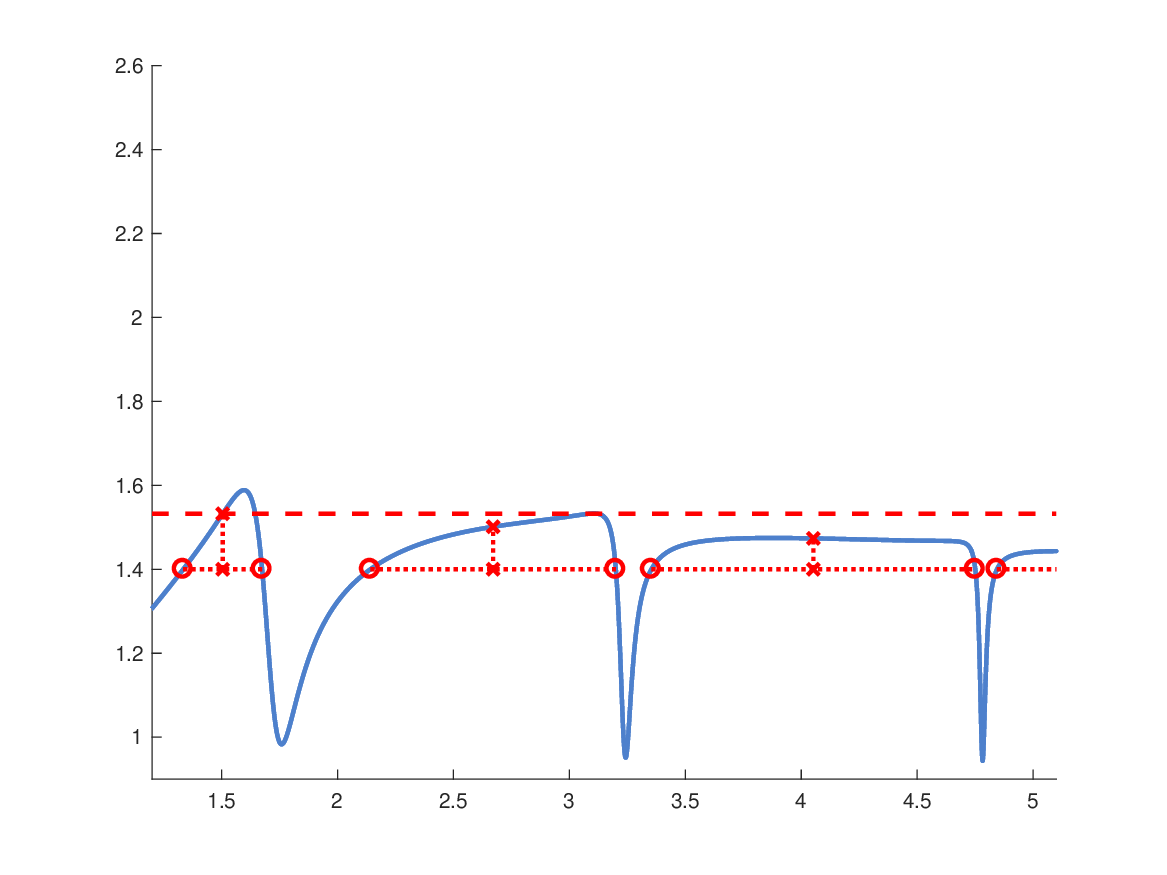} 
\label{fig:bbbs_std}
}
\subfloat[With cubic interpolation]{
\includegraphics[scale=.37,trim={1cm 0 2cm 0},clip]{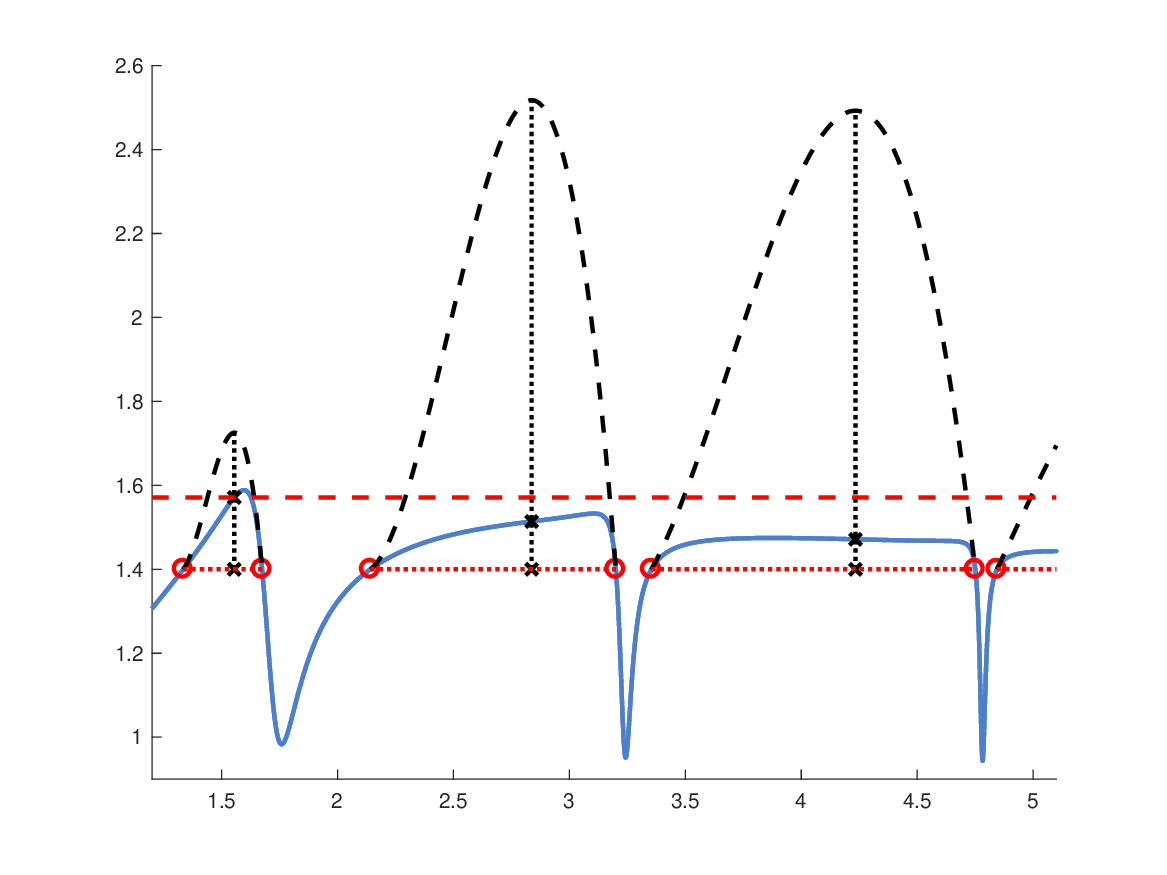}
\label{fig:bbbs_interp}
}
\caption{The blue curves show the value of $\ntfc(\omega) = \|G(\imagunit \omega)\|_2$ while the red circles
mark the frequencies $\{\omega_1,\ldots,\omega_l\}$ where $\ntfc(\omega) = \gamma = 1.4$, 
computed by taking the imaginary parts of the purely
imaginary eigenvalues of \eqref{eq:MNpencil_cont} on a sample problem.  
The left plot shows the midpoint scheme of the standard BBBS algorithm to obtain an increased value 
for $\gamma$, depicting by the dashed horizontal line.  The right plot shows the cubic interpolation
refinement of \cite{GenVV98} for the same problem and initial value of $\gamma$.  
The dashed black curves depict the cubic Hermite interpolants for each interval while the dotted vertical lines 
show their respective maximizing frequencies.  As can be seen, the cubic interpolation scheme results 
in a larger increase in $\gamma$ (again represented by the dashed horizontal line) compared to the standard BBBS method
on this iterate for this problem.
}
\label{fig:bbbs_plots}
\end{figure}

In \cite{GenVV98}, a refinement to the BBBS algorithm was proposed which increased its local quadratic rate of convergence to quartic.  This was done by evaluating $\ntfc(\omega)$ at the maximizing frequencies
of the unique cubic Hermite interpolants for each level-set interval, instead of at the midpoints.  
That is, for each interval $I_k = [\omega_k,\omega_{k+1}]$, 
the unique interpolant $c_k(\omega) = c_3\omega^3 + c_2\omega^2 + c_1\omega + c_0$ is constructed so that
\beq
\label{eq:interp_cont}
\begin{aligned}
	c_k(\omega_k) &= \ntfc(\omega_k) \\ 
	c_k(\omega_{k+1}) &= \ntfc(\omega_{k+1})
\end{aligned}
\qquad \text{and} \qquad
\begin{aligned}
	c^\prime_k(\omega_k) &= \ntfc^\prime(\omega_k) \\ 
	c^\prime_k(\omega_{k+1}) &= \ntfc^\prime(\omega_{k+1}).
\end{aligned}
\eeq
Then, $\gamma$ is updated via 
\beq
	\label{eq:gamma_cubic}
	\gamma_\mathrm{cubic} = \max \ntfc(\hat\omega_k) 
	\qquad \text{where} \qquad 
	\hat\omega_k = \argmax_{\omega \in I_k} c_k(\omega),
\eeq
that is, $\hat\omega_k$ is now the maximizing value of interpolant $c_k(\omega)$ on its interval $I_k$,
which of course can be cheaply and explicitly computed.
In \cite{GenVV98}, the single numerical example shows 
the concrete benefit of this interpolation scheme, where the standard BBBS algorithm required
six eigenvalue decompositions of \eqref{eq:MNpencil_cont} to converge, while their new method only required four.
As only the selection of the $\omega_k$ values is different, the pseudocode for this improved version of 
the BBBS algorithm remains largely the same, as mentioned in the note of Algorithm~\ref{alg:bbbs}.
Figure~\ref{fig:bbbs_interp} provides a corresponding pictorial description of the cubic-interpolant-based refinement.

As it turns out, computing the derivatives in \eqref{eq:interp_cont} for the Hermite interpolations of each interval
can also be done with little extra work.
Let $u(\omega)$ and $v(\omega)$ be the associated left and right singular vectors 
corresponding to $\ntfc(\omega)$,
recalling that $\ntfc(\omega)$ is the largest singular value of $\tfc(\omega)$,
and assume that $\ntfc(\hat\omega)$ is a simple singular value 
for some value $\hat\omega \in \R$.
By standard perturbation theory 
(exploiting the equivalence of singular values of a matrix $A$ and eigenvalues of 
$\left[ \begin{smallmatrix} 0 & A \\ A^* & 0\end{smallmatrix} \right]$
and applying \cite[Theorem 5]{Lan64}), it then follows that
\beq
	\label{eq:ntfcprime_cont}
	\ntfc^\prime(\omega) \Big\rvert_{\omega=\hat\omega} = \Real{ u(\hat\omega)^* \tfc^\prime(\hat\omega) v(\hat\omega) },
\eeq
where, by standard matrix differentiation rules with respect to parameter $\omega$, 
\beq
	\tfc^\prime(\omega) = - \imagunit C \left(\imagunit \omega E - A \right)^{-1} E \left(\imagunit \omega E - A \right)^{-1} B.
\eeq
As shown in \cite{SreVT95}, it is actually fairly cheap to compute \eqref{eq:ntfcprime_cont} 
if the eigenvectors corresponding to the purely imaginary eigenvalues of \eqref{eq:MNpencil_cont} have also been computed, as there is a correspondence between 
these eigenvectors
and the associated 
singular vectors for $\gamma$.  For $\gamma = \ntfc(\hat\omega)$,
if $\left[ \begin{smallmatrix} q \\ s \end{smallmatrix} \right]$ is an eigenvector of \eqref{eq:MNpencil_cont} for imaginary 
eigenvalue $\imagunit \hat\omega$, then the equivalences
\beq
	\label{eq:qs_vecs}
	q = \left( \imagunit \hat\omega E - A \right)^{-1}Bv(\hat\omega)
	\quad \text{and} \quad 
	s = \left( \imagunit \hat\omega E - A \right)^{-*}C^*u(\hat\omega)
\eeq
both hold, where $u(\hat\omega)$ and $v(\hat\omega)$ are left and right singular vectors 
associated with singular value $\ntfc(\hat\omega)$.  (To see why these equivalences hold, we refer the reader
to the proof of Theorem~\ref{thm:eigsing_disc} for the discrete-time analog result.)
Thus, \eqref{eq:ntfcprime_cont} may be rewritten as follows:
\begin{align}
	\label{eq:ntfcprime_defn_cont}
	\ntfc^\prime(\omega)\Big\rvert_{\omega=\hat\omega} &= 
		\Real{ u(\hat\omega)^* \tfc^\prime(\hat\omega) v(\hat\omega)} \\
	\label{eq:ntfcprime_direct_cont}
	 &= -\Real{ u(\hat\omega)^* \imagunit C \left(\imagunit \hat\omega E - A \right)^{-1} E \left(\imagunit \hat \omega E - A \right)^{-1} B v(\hat\omega)} \\
	 \label{eq:ntfcprime_eig_cont}
	&= -\Real{\imagunit s^* E q},
\end{align}
and it is thus clear that \eqref{eq:ntfcprime_cont} is cheaply computable for all the endpoints of the intervals $I_k$, 
provided that the eigenvalue decomposition of \eqref{eq:MNpencil_cont} has already been computed.  
 
\section{The computational costs involved in the BBBS algorithm}
\label{sec:costs}
The main drawback of the BBBS algorithm is its algorithmic complexity, which is $\bigO(n^3)$ work
per iteration.  This not only limits the tractability of computing the $\Hinf$ norm to rather low-dimensional (in $n$) 
systems but can also make computing the $\Hinf$ norm to full precision an expensive proposition for 
even moderately-sized systems.  In fact, the default tolerance for \texttt{hinfnorm} in \matlab\
is set quite large, 0.01, presumably to keep its runtime as fast as possible, at the expense of 
sacrificing accuracy.  In Table~\ref{table:hinfnorm_tol}, we 
report the relative error of computing the $\Hinf$ norm when using \texttt{hinfnorm}'s default
tolerance of $0.01$ compared to $10^{-14}$, along with the respective runtimes for several test problems,
observing that computing the $\Hinf$ norm to near full precision can often take between two to three times longer.
While computing only a handful of the most significant digits of the $\Hinf$ norm 
may be sufficient for some applications, this is certainly not true in general.  Indeed, the source
code for HIFOO \cite{BurHLetal06}, which designs $\Hinf$ norm fixed-order optimizing controllers for a given 
open-loop system via nonsmooth optimization, specifically contains the comment regarding
\texttt{hinfnorm}: ``default is .01, which is too crude".  In HIFOO, the $\Hinf$ norm is minimized
by updating the controller variables at every iteration but the optimization method assumes that the objective
function is continuous; if the $\Hinf$ norm is not calculated sufficiently accurately, then 
it may appear to be discontinuous, which can cause the underlying optimization method to break down.  
Thus there is motivation to not only improve the overall runtime of computing the $\Hinf$ norm for large 
tolerances, but also to make the computation as fast as possible when computing the $\Hinf$ norm to full precision.

\begin{table}
\centering
\begin{tabular}{ l | rrr | c | c | SS } 
\toprule
\multicolumn{8}{c}{\texttt{$h$ = hinfnorm($\cdot$,1e-14)} versus \texttt{$\hat h$ = hinfnorm($\cdot$,0.01)}}\\
\midrule
\multicolumn{1}{c}{} & \multicolumn{3}{c}{Dimensions} & \multicolumn{1}{c}{} & 
\multicolumn{1}{c}{Relative Error} & \multicolumn{2}{c}{Wall-clock time (sec.)}\\
\cmidrule(lr){2-4}
\cmidrule(lr){6-6}
\cmidrule(lr){7-8}
\multicolumn{1}{l}{Problem} & 
	\multicolumn{1}{c}{$n$} & 
	\multicolumn{1}{c}{$m$} &
	\multicolumn{1}{c}{$p$} & 
	\multicolumn{1}{c}{$E=I$} &
	\multicolumn{1}{c}{$ \frac{\hat h - h}{h}$} & 
	\multicolumn{1}{c}{\texttt{tol=1e-14}} &
	\multicolumn{1}{c}{\texttt{tol=0.01}} \\
\midrule
\texttt{CSE2} & 63 & 1 & 32 & Y      & $-2.47 \times 10^{-4}$   &    0.137 &    0.022 \\ 
\texttt{CM3} & 123 & 1 & 3 & Y       & $-2.75 \times 10^{-3}$   &    0.148 &    0.049 \\ 
\texttt{CM4} & 243 & 1 & 3 & Y       & $-4.70 \times 10^{-3}$   &    1.645 &    0.695 \\ 
\texttt{ISS} & 270 & 3 & 3 & Y       & $-1.04 \times 10^{-6}$   &    0.765 &    0.391 \\ 
\texttt{CBM} & 351 & 1 & 2 & Y       & $-4.20 \times 10^{-5}$   &    3.165 &    1.532 \\ 
\texttt{randn 1} & 500 & 300 & 300 & Y & 0                        &   21.084 &   30.049 \\ 
\texttt{randn 2} & 600 & 150 & 150 & N & $-6.10 \times 10^{-8}$   &   31.728 &   16.199 \\ 
\texttt{FOM} & 1006 & 1 & 1 & Y      & $-1.83 \times 10^{-5}$   &  128.397 &   36.529 \\ 
\midrule
\texttt{LAHd} & 58 & 1 & 3 & Y       & $-7.36 \times 10^{-3}$   &    0.031 &    0.015 \\ 
\texttt{BDT2d} & 92 & 2 & 4 & Y      & $-7.67 \times 10^{-4}$   &    0.070 &    0.031 \\ 
\texttt{EB6d} & 170 & 2 & 2 & Y      & $-5.47 \times 10^{-7}$   &    0.192 &    0.122 \\ 
\texttt{ISS1d} & 280 & 1 & 273 & Y   & $-1.53 \times 10^{-3}$   &   16.495 &    3.930 \\ 
\texttt{CBMd} & 358 & 1 & 2 & Y      & $-2.45 \times 10^{-6}$   &    1.411 &    0.773 \\ 
\texttt{CM5d} & 490 & 1 & 3 & Y      & $-7.38 \times 10^{-3}$   &   10.802 &    2.966 \\
\bottomrule
\end{tabular}
\caption{For various problems, the relative error of  
computing the $\Hinf$ norm using \texttt{hinfnorm} with its quite loose default tolerance 
is shown. 
The first eight are continuous time problems while the last six, ending in \texttt{d}, are discrete time.
As can be seen, using \texttt{hinfnorm} to compute the $\Hinf$ norm to near machine accuracy
can often take between two to four times longer, and that this penalty is not necessarily related to 
dimension: for example, 
the running times are increased by factors of 3.02 and 3.51 for \texttt{CM3} and \texttt{FOM}, respectively,
despite that \texttt{FOM} is nearly ten times larger in dimension.
}
\label{table:hinfnorm_tol}
\end{table}

The dominant cost of the BBBS algorithm 
is computing the eigenvalues of \eqref{eq:MNpencil_cont} at
every iteration.  Even though the method converges quadratically, and quartically when using 
the cubic interpolation refinement, the eigenvalues of $\MNpencont$ will still 
generally be computed for multiple values of $\gamma$ before convergence, for either variant
of the algorithm.
Furthermore, pencil $\MNpencont$ is $2n \times 2n$, meaning that the 
$\bigO(n^3)$ work per iteration also contains a significantly larger constant factor;
computing the eigenvalues of a $2n \times 2n$ problem typically takes at least eight times
longer than a $n \times n$ one.
If cubic interpolation is used, 
computing the derivatives \eqref{eq:ntfcprime_cont}
via the eigenvectors of $\MNpencont$, as proposed by \cite{GenVV98}
using the equivalences in \eqref{eq:qs_vecs} and \eqref{eq:ntfcprime_eig_cont},
can sometimes be quite expensive as well.
If on a particular iteration, 
the number of purely imaginary eigenvalues of $\MNpencont$ 
is close to $n$, say $\hat n$, then assuming 64-bit computation, 
an additional $4 \hat{n}^2$ doubles of memory would 
be required to store these eigenvectors.\footnote{
Although computing eigenvectors with \texttt{eig} in \matlab\ is currently an all or none affair, 
LAPACK does provide the user the option to only
compute certain eigenvectors, so that all $2n$ eigenvectors would not always need to be computed.}
Finally, computing the purely imaginary eigenvalues of $\MNpencont$
using the regular QZ algorithm can be ill advised; in practice, rounding error in the real parts
of the eigenvalues can make it difficult to detect which of the computed eigenvalues are 
supposed to be the purely imaginary ones and which are merely just close to the imaginary axis.
Indeed, purely imaginary eigenvalues can easily be perturbed off of the imaginary axis when 
using standard QZ; \cite[Figure 4]{BenSV16} illustrates this issue particularly well.  Failure
to properly identify the purely imaginary eigenvalues can cause the BBBS algorithm to return
incorrect results. As such, it is instead recommended \cite[Section II.D]{BenSV12} to use the 
specialized Hamiltonian-structure-preserving eigensolvers of \cite{BenBMetal02,BenSV16} to avoid this
problem.  However, doing so can be even more expensive as it requires computing the eigenvalues 
of a related matrix pencil that is even larger: $(2n+m+p)\times(2n+m+p)$.  

On the other hand, computing \eqref{eq:ntf_cont}, the norm of the transfer function, is typically rather 
inexpensive, at least relative to computing the imaginary eigenvalues of the matrix pencil 
\eqref{eq:MNpencil_cont}; 
Table~\ref{table:ntf_vs_pencil} presents for data on how much faster computing the 
singular value decomposition of $G(\imagunit \omega)$  
can be
compared to computing 
the eigenvalues of $\MNpencont$ (using regular QZ), 
using randomly-generated systems composed of
dense matrices of various dimensions.  
In the first row of Table~\ref{table:ntf_vs_pencil}, we see that computing the eigenvalues of \eqref{eq:MNpencil_cont}
for tiny systems ($n=m=p=20$) can take up to two-and-a-half times longer than computing the SVD of $G(\imagunit \omega)$ on 
modern hardware 
and this disparity quickly grows larger as the dimensions are all increased 
(up to 36.8 faster for $n=m=p=400$).  Furthermore, for moderately-sized systems
where $m,p \ll n$ (the typical case in practice), 
the performance gap dramatically widens to up to 119 times faster
to compute the SVD of $G(\imagunit \omega)$ versus the eigenvalues of $\MNpencont$
(the last row of Table~\ref{table:ntf_vs_pencil}).
Of course, this disparity in runtime speeds is not surprising.  Computing the eigenvalues of \eqref{eq:MNpencil_cont} 
involves working with a $2n \times 2n$ (or larger when using structure-preserving eigensolvers) 
matrix pencil while the main costs to evaluate the norm of the transfer 
function at a particular frequency
involve first solving a linear system of dimension $n$ 
to compute either the $(\imagunit \omega E - A)^{-1}B$ or $C(\imagunit \omega E - A)^{-1}$ 
term in $G(\imagunit \omega)$ and then computing 
the maximum singular value of $G(\imagunit \omega)$, which is $p \times m$.  If $\max(m,p)$ is small, 
the cost to compute the largest singular value is negligible and even if $\max(m,p)$ is not small, the largest singular value 
can still  typically be  computed easily and efficiently using sparse methods.  Solving the $n$-dimensional linear system is 
typically going to be much cheaper than computing the eigenvalues of the $2n \times 2n$ pencil, and more so if 
$A$ and $E$ are not dense and $(\imagunit \omega E - A)$ permits a fast (sparse) LU decomposition.

\begin{table}
\setlength{\tabcolsep}{8pt}
\centering
\begin{tabular}{ccc|cc}
\toprule
\multicolumn{5}{c}{Computing $\|G(\imagunit \omega)\|_2$ versus \texttt{eig}($\MNcontnp$)}\\
\midrule
\multicolumn{3}{c}{} & \multicolumn{2}{c}{Times faster}\\
\cmidrule(lr){4-5}
$n$ & $m$ & $p$ & min & max \\
\midrule
20 & 20 & 20 & 0.71 & 2.47 \\
100 & 100 & 100 & 6.34 & 10.2 \\
400 & 400 & 400 & 19.2 & 36.8 \\
400 & 10 & 10 & 78.5 & 119.0\\ 
\bottomrule
\end{tabular}
\caption{
For each set of dimensions (given in the leftmost three columns), 
five different systems were randomly generated
and the running times to compute $\|G(\imagunit \omega)\|_2$ and \texttt{eig}($\MNcontnp$)
were recorded to form the ratios of these five pairs of values.  
Ratios greater than one indicate that it is faster to compute $\|G(\imagunit \omega)\|_2$ than
\texttt{eig}($\MNcontnp$), and by how much, while ratios less than one indicate
the opposite.  
The rightmost two columns of the table give the smallest and largest of the five ratios observed
per set of dimensions.
}
\label{table:ntf_vs_pencil}
\end{table}

\section{The improved algorithm}
\label{sec:hybrid_opt}
Recall that computing the $\Hinf$ norm is done by maximizing $\ntfc(\omega)$ over $\omega \in \R$ but
that the BBBS algorithm (and the cubic interpolation refinement) actually converges to a global maximum 
of $\ntfc(\omega)$ by iteratively computing the eigenvalues of the large matrix pencil $\MNpencont$ 
for successively 
larger values of $\gamma$.  However, we could alternatively consider a more direct approach of
finding maximizers of $\ntfc(\omega)$, which as discussed above, is a much cheaper function 
to evaluate numerically.  
Computing such maximizers could allow larger increases in $\gamma$ 
to be obtained on each iteration,
compared to just evaluating $\ntfc(\omega)$ at the midpoints or maximizers of the cubic interpolants.
This in turn should reduce the number of times that 
the eigenvalues of $\MNpencont$ must be computed and thus speed up the 
overall running time of the algorithm;
given the performance data in Table~\ref{table:ntf_vs_pencil}, the
additional cost of any evaluations of $\ntfc(\omega)$ needed to find maximizers
seems like it should be more than offset by fewer eigenvalue decompositions of 
 $\MNpencont$.
Of course, computing the eigenvalues of $\MNpencont$ at each iteration 
 cannot be eliminated completely, as it is still necessary for asserting
 whether or not any of the maximizers was a global maximizer 
 (in which case, the $\Hinf$ norm has been computed), or
 if not, to provide the remaining level set intervals where a global maximizer lies
 so the computation can continue.
 
As alluded to in the introduction, for the special case of the distance to instability, 
a similar cost-balancing strategy has been considered before in \cite{HeW99} but the authors themselves 
noted that the inverse iteration scheme they employed to find locally optimal solutions
could sometimes have very slow convergence and expressed concern that other optimization methods could suffer similarly.
Of course, in this paper we are considering the more general case of computing the $\Hinf$ norm and,
as we will observe in our later experimental evaluation, 
the first- and second-order optimization techniques we now propose  do in fact seem to work well in practice.

Though $\ntfc(\omega)$ is typically nonconvex, standard optimization methods should generally
still be able to find local maximizers, if not always global maximizers, provided that $\ntfc(\omega)$ is sufficiently smooth.  
Since $\ntfc(c)$ is the maximum singular value of $G(\imagunit\omega)$, it is locally Lipschitz 
(e.g. \cite[Corollary 8.6.2]{GolV13}).  
Furthermore, in proving the quadratic convergence of the midpoint-based BBBS algorithm, 
it was shown that at local maximizers, the second derivative of
$\ntfc(\omega)$ not only exists but is even locally Lipschitz \cite[Theorem~2.3]{BoyB90}.
The direct consequence is that
Newton's method for optimization can be expected to converge quadratically when it is used 
to find a local maximizer of $\ntfc(\omega)$.  Since there is only one optimization variable, namely $\omega$,
there is also the benefit that we need only work with first and second derivatives, 
instead of gradients and Hessians, respectively.  
Furthermore, if $\ntfc^{\prime\prime}(\omega)$ is expensive to compute,
one can instead resort to the secant method (which is a quasi-Newton method in one variable) 
and still obtain superlinear convergence.
Given the large disparity in costs to compute the eigenvalues of $\MNpencont
$ and $\ntfc(\omega)$, 
it seems likely that even just superlinear convergence could still be sufficient to significantly accelerate the computation of the $\Hinf$ norm.
Of course, when $\ntfc^{\prime\prime}(\omega)$ is relatively cheap to compute,
additional acceleration is likely to be obtained when using Newton's method.
Note that many alternative optimization strategies could also be employed here,
potentially with additional efficiencies. 
But, for sake of simplicity, we will just restrict the discussion in this paper to the secant method and 
Newton's method, particularly since conceptually there is no difference.

Since $\ntfc(\omega)$ will now need to be evaluated at any point requested by an optimization method, 
we will need to compute its first and possibly second derivatives directly; 
recall that using the eigenvectors of the purely imaginary eigenvalues of 
\eqref{eq:MNpencil_cont} with the equivalences in \eqref{eq:qs_vecs} and \eqref{eq:ntfcprime_eig_cont}
only allows us to obtain the first derivatives at the end points of the level-set intervals.
However, as long as we also compute the associated left and right singular vectors $u(\omega)$ and $v(\omega)$ when 
computing $\ntfc(\omega)$, the value of the first derivative $\ntfc^\prime(\omega)$ 
can be computed via the direct formulation given 
in \eqref{eq:ntfcprime_direct_cont} and without much additional cost over computing $\ntfc(\omega)$ itself.
For each frequency $\omega$ of interest,
an LU factorization of $(\imagunit \omega E - A)$ can be done once and reused to solve the 
linear systems due to the presence of $(\imagunit \omega E - A)^{-1}$, which appears 
once in $\ntfc(\omega)$ and twice in $\ntfc^\prime(\omega)$.

To compute $\ntfc^{\prime\prime}(\omega)$, we will need the following result for second derivatives of eigenvalues,
which can be found in various forms in \cite{Lan64}, \cite{OveW95}, and \cite{Kat82}.
\begin{theo}
\label{thm:eig2ndderiv}
For $t \in \R$, let $H(t)$ be a twice-differentiable $n \times n$ Hermitian matrix family with distinct eigenvalues at $t=0$ with $(\lambda_k,x_k)$ denoting the $k$th such eigenpair and where each eigenvector $x_k$ has unit norm
and the eigenvalues are ordered $\lambda_1 > \ldots > \lambda_n$.  
Then:
\[
	\lambda_1''(t) \bigg|_{t=0}= x_1^* H''(0) x_1 + 2 \sum_{k = 2}^{n} \frac{| x_1^* H'(0) x_k |^2}{\lambda_1 - \lambda_k}.
\]
\end{theo}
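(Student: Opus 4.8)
The plan is to use first- and second-order perturbation theory: differentiate the eigenvalue equation $H(t)x_1(t)=\lambda_1(t)x_1(t)$ twice at $t=0$, and then resolve the first-order eigenvector variation $x_1'(0)$ by expanding it in the eigenbasis of $H(0)$. First I would invoke the distinctness of the eigenvalues at $t=0$ to guarantee that, in a neighborhood of the origin, one may select a twice-differentiable branch $\lambda_1(t)$ of the largest eigenvalue together with a twice-differentiable unit eigenvector $x_1(t)$; this follows from the implicit function theorem (equivalently, the cited perturbation-theory results) applied to a simple eigenvalue, and it is essentially the only place the distinctness hypothesis is needed. Because $H(0)$ is Hermitian with distinct eigenvalues, its normalized eigenvectors $\{x_k\}$ form an orthonormal basis of $\C^n$, which I will use for the expansion below.

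Next I differentiate the eigenvalue equation once, left-multiply by $x_1^*$, and use $x_1^* H(0)=\lambda_1 x_1^*$ to cancel the $x_1'$ terms, recovering the first-order formula $\lambda_1'(0)=x_1^* H'(0)x_1$ (Lancaster's result, already cited). Differentiating a second time gives $H'' x_1 + 2H' x_1' + H x_1'' = \lambda_1'' x_1 + 2\lambda_1' x_1' + \lambda_1 x_1''$, and left-multiplying again by $x_1^*$ cancels the $x_1''$ contributions, leaving $\lambda_1''(0) = x_1^* H''(0) x_1 + 2\,x_1^* H'(0)x_1' - 2\lambda_1'\,x_1^* x_1'$. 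Differentiating the normalization $x_1^* x_1 = 1$ yields $\Re(x_1^* x_1')=0$, and a phase (gauge) choice for the eigenvector branch lets me assume $x_1^* x_1' = 0$ outright, so the final term drops.

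The crux is then to evaluate $x_1^* H'(0) x_1'$ by solving for $x_1'$. Writing $x_1' = \sum_k c_k x_k$ with $c_1 = 0$ (from $x_1^* x_1' = 0$), I rearrange the once-differentiated equation into $(H(0)-\lambda_1 I)x_1' = (\lambda_1' I - H'(0))x_1$ and left-multiply by $x_k^*$ for each $k\neq 1$; orthonormality together with $x_k^* x_1 = 0$ then gives $c_k = (x_k^* H'(0)x_1)/(\lambda_1-\lambda_k)$, which is well defined precisely because the eigenvalues are distinct. Substituting back, $x_1^* H'(0)x_1' = \sum_{k\geq 2}(x_1^* H'(0)x_k)(x_k^* H'(0)x_1)/(\lambda_1-\lambda_k)$, and since $H'(0)$ is Hermitian (as $H(t)$ is Hermitian for all $t$) each numerator equals $|x_1^* H'(0)x_k|^2$. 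Multiplying by $2$ and adding $x_1^* H''(0)x_1$ yields the claimed identity. The only genuinely delicate step is the first one, establishing a twice-differentiable eigenpair branch, which I would defer to the cited perturbation theory; everything afterward is bookkeeping driven by Hermiticity, orthonormality, and the gauge choice.
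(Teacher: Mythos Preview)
Your derivation is correct and is the standard perturbation-theory argument for the second derivative of a simple eigenvalue of a Hermitian family. The paper itself does not supply a proof of this theorem; it simply states the result and refers the reader to Lancaster, Overton--Womersley, and Kato for the various forms in which it appears. So there is no ``paper's own proof'' to compare against, and your argument would serve perfectly well as a self-contained justification. One minor remark: the gauge choice $x_1^* x_1'(0)=0$ is convenient but not actually needed, since the $c_1$-contribution to $2x_1^* H'(0)x_1'$ is exactly $2\lambda_1'(0)\,x_1^* x_1'$ and thus cancels against the $-2\lambda_1'(0)\,x_1^* x_1'$ term regardless of gauge.
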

Since $\ntfc(\omega)$ is the largest singular value of $\|G(\imagunit \omega)\|_2$, it is also 
the largest eigenvalue of the matrix:
\beq
	\label{eq:eigderiv_mat}
	\eigderivmat(\omega) = 
	\begin{bmatrix} 0 & \tfc(\omega) \\ 
				\tfc(\omega)^* & 0 
	\end{bmatrix},
\eeq
which has first and second derivatives
\beq
	\label{eq:eigderiv12_mat}
	\eigderivmat^\prime(\omega) = 
	\begin{bmatrix} 0 & \tfc^\prime(\omega) \\ 
				\tfc^\prime(\omega)^* & 0
	\end{bmatrix}
	\quad \text{and} \quad
	\eigderivmat^{\prime\prime}(\omega) = 
	\begin{bmatrix} 0 & \tfc^{\prime\prime}(\omega) \\ 
				\tfc^{\prime\prime}(\omega)^* & 0
	\end{bmatrix}.
\eeq
The formula for $\tfc^\prime(\omega)$ is given by \eqref{eq:ntfcprime_direct_cont} while 
the corresponding second derivative is obtained by straightforward application
of matrix differentiation rules:
\beq
	\label{eq:tfc2_cont}
	\tfc^{\prime\prime}(\omega) = 
		-2 C(\imagunit \omega E - A)^{-1}E(\imagunit \omega E - A)^{-1}E(\imagunit \omega E - A)^{-1} B.
\eeq
Furthermore, the eigenvalues and eigenvectors of \eqref{eq:eigderiv_mat} needed 
to apply Theorem~\ref{thm:eig2ndderiv} are essentially directly available from just the full
SVD of $\tfc(\omega)$.  Let $\sigma_k$ be the $k$th singular value of $\tfc(\omega)$, along with associated
right and left singular vectors $u_k$ and $v_k$, respectively.  
Then $\pm\sigma_k$ is an eigenvalue of \eqref{eq:eigderiv_mat} with  
eigenvector $\left[ \begin{smallmatrix} u_k \\ v_k \end{smallmatrix} \right]$ for $\sigma_k$ and 
eigenvector $\left[ \begin{smallmatrix} u_k \\ -v_k \end{smallmatrix}\right]$ for $-\sigma_k$.
When $\sigma_k = 0$, the corresponding eigenvector is either 
$\left[\begin{smallmatrix} u_k \\ \mathbf{0} \end{smallmatrix} \right]$
if $p > m$ or
$\left[\begin{smallmatrix} \mathbf{0} \\ v_k \end{smallmatrix} \right]$
if $p < m$,
where $\mathbf{0}$ denotes a column of $m$ or $p$ zeros, respectively.
Given the full SVD of $\tfc(\omega)$,
computing $\ntfc^{\prime\prime}(\omega)$ can also be done with relatively little additional cost.
The stored LU factorization of $(\imagunit \omega E - A)$ used to obtain $\tfc(\omega)$ 
can again be reused to quickly compute the $\tfc^\prime(\omega)$ and $\tfc^{\prime\prime}(\omega)$
terms in \eqref{eq:eigderiv12_mat}.
If obtaining the full SVD is particularly expensive, i.e. for systems with many inputs/outputs, 
as mentioned above, sparse methods can still be used to efficiently obtain 
the largest singular value and its associated right/left singular vectors,
in order to at least calculate $\ntfc^{\prime}(\omega)$, if not $\ntfc^{\prime\prime}(\omega)$ as well.

\begin{rema}
On a more theoretical point, by invoking Theorem~\ref{thm:eig2ndderiv} to compute 
$\ntfc^{\prime\prime}(\omega)$, we are also assuming that the singular values of
$\tfc(\omega)$ are unique as well.  
However, in practice, this will almost certainly hold numerically, 
and to adversely impact the convergence rate of Newton's method,
it would have to frequently fail to hold,
which seems an exceptionally unlikely scenario.
As such, we feel that this additional assumption is not of practical concern.
\end{rema}

Thus, our new proposed improvement to the BBBS algorithm is to not settle for
the increase in $\gamma$ provided by the standard midpoint or cubic interpolation schemes, 
but to increase $\gamma$ \emph{as far as possible} on every iteration using standard optimization techniques 
applied to $\ntfc(\omega)$.  Assume that $\gamma$ is still less than the value of the $\Hinf$ norm and let 
\[
	\hat\omega_j = \argmax \ntfc(\hat \omega_k),
\]
where the finite set of $\hat\omega_k$ values are the midpoints of the level-set intervals $I_k$
or the maximizers of the cubic interpolants on these intervals, respectively
defined in \eqref{eq:gamma_mp} or \eqref{eq:gamma_cubic}.  Thus the solution
$\hat\omega_j \in I_j$ is the frequency that provides the updated value $\gamma_\mathrm{mp}$ 
or $\gamma_\mathrm{cubic}$ in the standard algorithms.
Now consider applying either Newton's method or the secant method (the choice of which one will be more efficient 
can be made more or less automatically depending on how $m,p$ compares to $n$)
to the following optimization 
problem with a simple box constraint:
\beq
	\label{eq:gamma_opt}
	\max_{\omega \in I_j} \ntfc(\omega).
\eeq
If the optimization method is initialized at $\hat\omega_j$,
then even if $\omega_\mathrm{opt}$, a \emph{computed} solution  to \eqref{eq:gamma_opt},
is actually just a \emph{local} maximizer 
(a possibility since \eqref{eq:gamma_opt} could be nonconvex),
it is still guaranteed that 
\[
	\ntfc(\omega_\mathrm{opt}) > 
		\begin{cases} 
		\gamma_\mathrm{mp} & \text{initial point $\hat \omega_j$ is a midpoint of $I_j$} \\
		\gamma_\mathrm{cubic} & \text{initial point $\hat \omega_j$ is a maximizer of interpolant $c_j(\omega)$}
		\end{cases}
\]
holds, provided that $\hat\omega_j$ does not happen to be a stationary point of $\ntfc(\omega)$.
Furthermore, \eqref{eq:gamma_opt} can only have more than one maximizer 
when the current estimate $\gamma$ of the $\Hinf$ norm is so low that 
there are multiple peaks above level-set interval $I_j$. 
Consequently,
as the algorithm converges, computed maximizers of \eqref{eq:gamma_opt} will be assured to be globally optimal over $I_j$
and in the limit, over all frequencies along the entire imaginary axis.
By setting tight tolerances for the optimization code, maximizers of 
\eqref{eq:gamma_opt} can also be computed to full precision with little to no penalty,
 due to the superlinear or quadratic rate of convergence 
we can expect from the secant method or Newton's method, respectively.
If the computed optimizer of \eqref{eq:gamma_opt} also happens to be a global maximizer 
of $\ntfc(\omega)$, for all $\omega \in \R$, 
then the $\Hinf$ norm has indeed been computed to full precision, but the algorithm must still verify this by 
computing the imaginary eigenvalues of $\MNpencont$ just one more time. 
However, if a global optimizer has not yet been found, then the algorithm must 
compute the imaginary eigenvalues of $\MNpencont$ at least two times more: one or more
times as the algorithm increases $\gamma$ to the globally optimal value, and then a final evaluation 
to verify that the computed value is indeed globally optimal.  Figure~\ref{fig:bbbs_opt}
shows a pictorial comparison of optimizing $\ntfc(\omega)$ compared to the midpoint and 
cubic-interpolant-based updating methods.

\begin{figure}
\subfloat[Optimization-based approach]{
\includegraphics[scale=.37,trim={1cm 0 2cm 0},clip]{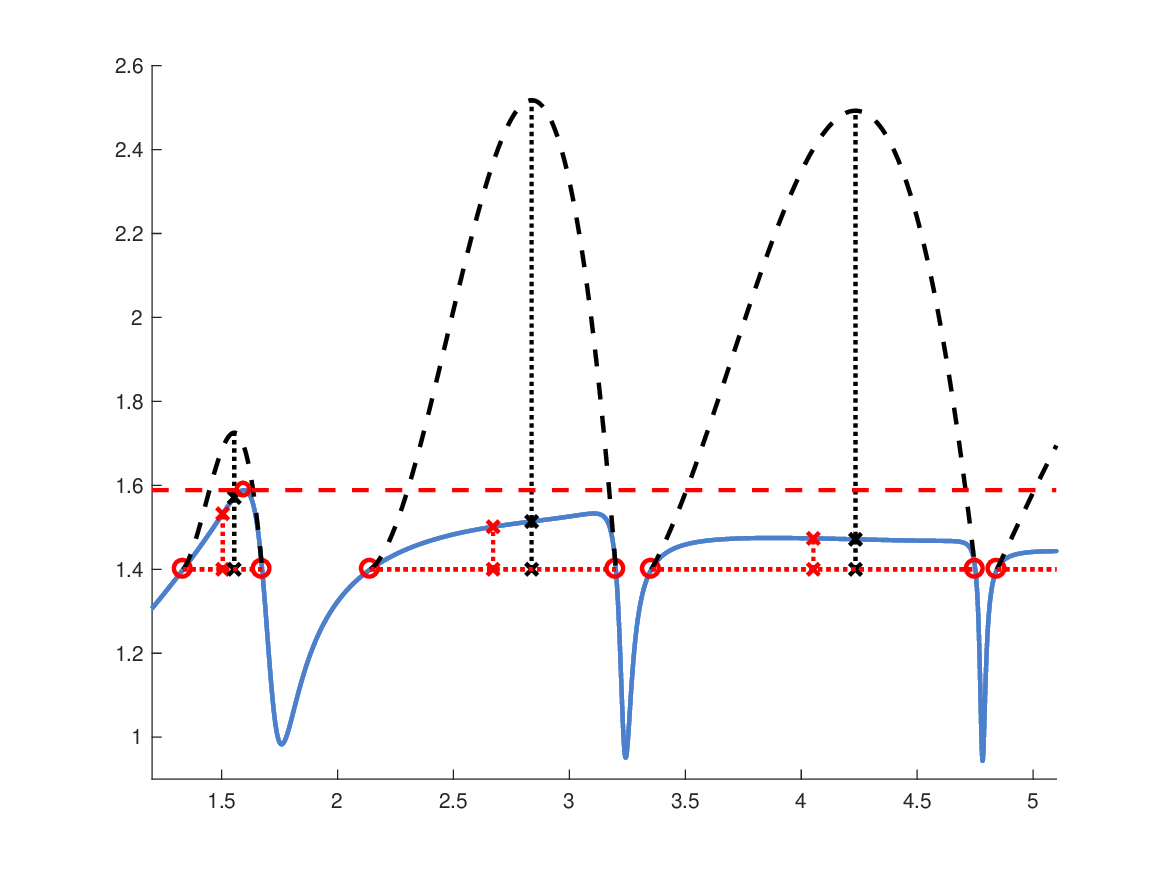} 
\label{fig:opt_comparison}
}
\subfloat[Close up of leftmost peak in Figure~\ref{fig:opt_comparison}]{
\includegraphics[scale=.37,trim={1cm 0 2cm 0},clip]{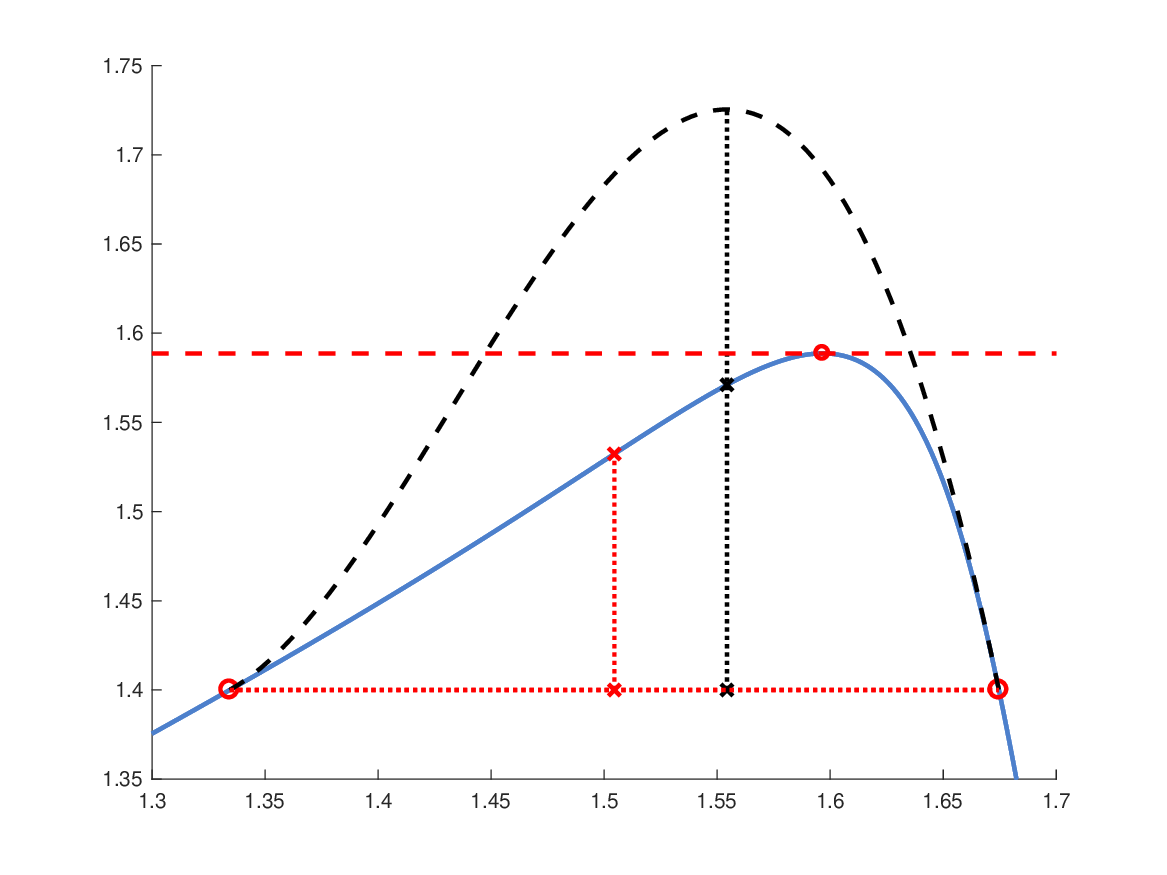}
}
\caption{For the same example as Figure~\ref{fig:bbbs_plots}, 
the larger increase in $\gamma$ attained by optimizing \eqref{eq:gamma_opt}
is shown by the red dashed line going through the red circle at the top of
the leftmost peak of $\| G(\imagunit \omega)\|_2$.  
By comparison, the BBBS midpoint (red dotted vertical lines and x's) and
the cubic-interpolant-based schemes (black dotted vertical lines and x's) 
only provide suboptimal increases in $\gamma$.}
\label{fig:bbbs_opt}
\end{figure}

In the above discussion, we have so far only considered applying optimization over 
the single level-set interval $I_j$ but we certainly could attempt to solve
\eqref{eq:gamma_opt} for other level-set intervals as well.
Let $\nopt = 1,2,\ldots$ be the max number of level-set intervals to optimize over per iteration
and $q$ be the number of level-set intervals for the current value of $\gamma$.
Compared to just optimizing over $I_j$, optimizing over all $q$ of the level-set intervals could yield
an even larger increase in the estimate $\gamma$ 
but would be the most expensive option computationally.
If we do not optimize over all the level-set intervals, i.e. $\nopt < q$,
there is the question of which intervals should be prioritized for optimization.
In our experiments, we found that 
prioritizing by
first evaluating $\ntfc(\omega)$ at all the $\hat\omega_k$ values and then choosing the intervals
$I_k$ to optimize where $\ntfc(\hat\omega_k)$ takes on the largest values
seems to be a good strategy. However,
with a serial MATLAB code, we have observed that just optimizing over the most promising interval,
i.e. $\nopt = 1$ so just $I_j$, 
is generally more efficient in terms of running time than trying to optimize over more intervals.
For discussion about optimizing over multiple intervals in parallel, see Section~\ref{sec:parallel}.

Finally, if a global maximizer of $\ntfc(\omega)$ can potentially be found before ever 
computing eigenvalues of $\MNpencont$ even once, 
then only one expensive eigenvalue decomposition $\MNpencont$ will be incurred,
just to verify that the initial maximizer is indeed a global one.
Thus, we also propose initializing the algorithm at a maximizer of $\ntfc(\omega)$,
 obtained via applying standard optimization techniques to
\beq
	\label{eq:gamma_init}
	\max \ntfc(\omega),
\eeq
which is just \eqref{eq:gamma_opt} without the box constraint.
In the best case, the computed maximizer will be a global one, but even a local
maximizer will still provide a higher initial estimate of the $\Hinf$ norm 
compared to initializing at a guess that may not even be locally optimal.
Of course, finding maximizers of \eqref{eq:gamma_init} by starting from multiple initial guesses can also be done in parallel; 
we again refer to Section~\ref{sec:parallel} for more details.

\begin{algfloat}
\begin{algorithm}[H]
\floatname{algorithm}{Algorithm}
\caption{The Improved Algorithm Using Local Optimization}
\label{alg:hybrid_opt}
\begin{algorithmic}[1]
	\REQUIRE{  
		Matrices $A \in \Cmn{n}{n}$, $B \in \Cmn{n}{m}$, $C \in \Cmn{p}{n}$, $D \in \Cmn{p}{m}$, 
		and $E \in \Cmn{n}{n}$, 
		initial frequency guesses $\{\omega_1,\ldots,\omega_q\} \in \R$ and
		$\nopt$, a positive integer indicating the number of intervals/frequencies to optimize per round.
	}
	\ENSURE{ 
		$\gamma = \| G \|_\hinfcal$ and $\omega$ such that 
		$\gamma = \ntfc(\omega)$.  
		\\ \quad
	}
	
	\STATE \COMMENT{Initialization:}
	\STATE Compute $[\gamma_1,\ldots,\gamma_q] = [\ntfc(\omega_1),\ldots,\ntfc(\omega_q)]$. 
			 \label{algline:init_start} 
	\STATE Reorder $\{\omega_1,\ldots,\omega_q\}$ s.t. $\gamma_j \ge \gamma_{j+1}$.
	\STATE Find maximal values $[\gamma_1,\ldots,\gamma_\nopt]$ of \eqref{eq:gamma_init}, using initial points
			$\omega_j$, $j=1,\ldots,\nopt$.  
			\label{algline:init_opt}
	\STATE $\gamma = \max([\gamma_1,\ldots,\gamma_\nopt])$.
			\label{algline:gamma_init}
	\STATE $\omega = \omega_j$ for $j$ s.t. $\gamma = \gamma_j$.
			\label{algline:init_end} 
	\STATE \COMMENT{Convergent Phase:}
	\WHILE {not converged} 
		\STATE \COMMENT{Compute the intervals that lie under $\ntfc(\omega)$ using 
						eigenvalues of the pencil:}
		\STATE Compute $\Lambda_\mathrm{I} =
				\{ \Im \lambda : \lambda \in \Lambda(\MNcontnp) ~\text{and}~ \Re \lambda = 0\}$.
				\label{algline:pencil}
		\STATE Index and sort $\Lambda_\mathrm{I} = \{\omega_1,...,\omega_l\}$ s.t. 
				$\omega_j \le \omega_{j+1}$.
				\label{algline:frequencies}
		\STATE Form all intervals $I_k = [\omega_k, \omega_{k+1}]$ s.t.
				each interval at height $\gamma$ is below $\ntfc(\omega)$.	
				\label{algline:hyopt_ints}
		\STATE \COMMENT{Compute candidate frequencies of the level-set intervals $I_k$ ($q$ of them):}
		\STATE Compute all $\hat\omega_k$ using either \eqref{eq:gamma_mp} or \eqref{eq:gamma_cubic}.
				\label{algline:hyopt_points}
		\STATE \COMMENT{Run box-constrained optimization on the $\nopt$ most promising frequencies:}
		\STATE Compute $[\gamma_1,\ldots,\gamma_q] = [\ntfc(\hat\omega_1),\ldots,\ntfc(\hat\omega_q)]$. 
		\STATE Reorder $\{\hat\omega_1,\ldots,\hat\omega_q\}$ and intervals $ I_k$ s.t. $\gamma_j \ge \gamma_{j+1}$.
			\label{algline:hyopt_reorder}
		\STATE Find maximal values $[\gamma_1,\ldots,\gamma_\nopt]$ of \eqref{eq:gamma_opt} using initial points $\hat\omega_j$, $j = 1,\ldots,\nopt$. 
		\label{algline:hyopt_opt}
		\STATE \COMMENT{Update to the highest gain computed:}
		\STATE $\gamma = \max([\gamma_1,\ldots,\gamma_\nopt])$.
		\STATE $\omega = \omega_j$ for $j$ s.t. $\gamma = \gamma_j$.
	\ENDWHILE
	\STATE \COMMENT{Check that the maximizing frequency is not at infinity (continuous-time only)}
	\IF { $\gamma < \|D\|_2$}
		\STATE $\gamma = \|D\|_2$.
		\STATE $\omega = \infty$.
	\ENDIF
\end{algorithmic}
\end{algorithm}
\algnote{
For details on how embarrassingly parallel processing can 
be used to further improve the algorithm, see Section~\ref{sec:parallel}.
}
\end{algfloat}

Algorithm~\ref{alg:hybrid_opt} provides a high-level pseudocode description of our improved method.
As a final step in the algorithm, it is necessary to check whether or not the value of the 
$\Hinf$ norm is attained at $\omega = \infty$.
We check this case after the convergent phase has computed a global maximizer over
the union of all intervals it has considered. 
The only possibility that the $\Hinf$ norm may be attained at $\omega = \infty$
in Algorithm~\ref{alg:hybrid_opt}  is when the initial value of $\gamma$ computed in 
line~\ref{algline:gamma_init} of is less than $\|D\|_2$.  As the assumptions of Theorem~\ref{thm:eigsing_cont} require 
that $\gamma$ not be a singular value of $D$, it is not valid to use $\gamma = \|D\|_2$ in 
$\MNpencont$ to check if this pencil has any imaginary eigenvalues.  However,
if the optimizer computed by the convergent phase of Algorithm~\ref{alg:hybrid_opt} yields a
$\gamma$ value less than $\|D\|_2$, then it is clear that the optimizing frequency is at $\omega = \infty$.

\section{Handling discrete-time systems}
\label{sec:discrete} Now consider the discrete-time linear dynamical system 
\begin{subequations}
\label{eq:lti_disc}
\begin{align}
Ex_{k+1} &=  Ax_k + Bu_k \\
y_k & =  Cx_k + Du_k,
\end{align}
\end{subequations}
where the matrices are defined as before in \eqref{eq:lti_cont}.  In this
case, the $\Hinf$ norm is defined as
\beq
	\label{eq:hinf_disc}
	\|G\|_{\hinfcal} \coloneqq \max_{\theta \in [0,2\pi)} \| G(\eitheta) \|_2,
\eeq
again assuming that pencil $(A,E)$ is at most index one.
If all finite eigenvalues are either strictly inside the unit disk centered 
at the origin or are uncontrollable or unobservable, then \eqref{eq:hinf_disc} is finite
and the $\Hinf$ norm is attained at some $\theta \in [0,2\pi)$.
Otherwise, it is infinite.

We now show the analogous version of Theorem~\ref{thm:eigsing_cont} for discrete-time systems.
The $D=0$ and $E=I$ case was considered in \cite[Section 3]{HinS91} 
while the more specific $B=C=E=I$ and $D=0$ case was given in 
\cite[Theorem 4]{Bye88}.\footnote{Note that equation (10) in \cite{Bye88} 
has a typo:  $A^H - \eitheta I$ in the lower left block of $K(\theta)$ should actually be 
$A^H - \eithetaconj I$.}
These results relate singular values of the transfer function for discrete-time systems
to eigenvalues with modulus one of associated matrix pencils.
Although the following more general result is already known,  
the proof, to the best of our knowledge, is not in the literature so we include it in full here.
The proof follows a similar argumentation as the proof of Theorem~\ref{thm:eigsing_cont}.
\begin{theo}
\label{thm:eigsing_disc}
Let $\lambda E - A$ be regular with no finite eigenvalues on the unit circle, $\gamma > 0$ not a singular value of $D$, and $\theta \in [0,2\pi)$.
Consider the matrix pencil $\MNpendisc$, where
\beq
	\label{eq:MNpencil_disc}
\begin{aligned}
	\Md \coloneqq {}& \begin{bmatrix} 	A - BR^{-1}D^*C 	& -\gamma BR^{-1}B^* \\ 
								0 				& E^* \end{bmatrix}, \\
	\Nd \coloneqq {}& \begin{bmatrix} 	E & 0\\ 
								-\gamma C^*S^{-1}C 	& (A - BR^{-1}D^*C)^*
								\end{bmatrix},
\end{aligned}
\eeq
$R = D^*D - \gamma^2 I$ and $S = DD^* - \gamma^2 I$.
Then $e^{\imagunit \theta}$ is an eigenvalue of matrix pencil $\MNpendisc$ if and only if
$\gamma$ is a singular value of $G(e^{\imagunit \theta})$. 
\end{theo}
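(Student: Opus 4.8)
My plan is to follow the same template as the continuous-time proof of Theorem~\ref{thm:eigsing_cont} in \cite{BenSV12}: encode the singular-value condition through a pair of singular vectors, introduce ``state'' and ``adjoint-state'' vectors, and then eliminate the singular vectors to expose the generalized eigenvalue equation of the pencil. Write $\lambda = \eitheta$. The starting point is that $\gamma$ is a singular value of $G(\lambda)$ if and only if there exist nonzero $u \in \C^p$ and $v \in \C^m$ with $G(\lambda) v = \gamma u$ and $G(\lambda)^* u = \gamma v$. Setting
\beq
	x = (\lambda E - A)^{-1} B v
	\quad\text{and}\quad
	z = (\eithetaconj E^* - A^*)^{-1} C^* u,
\eeq
which are well defined since $\lambda E - A$ (hence its conjugate transpose $\eithetaconj E^* - A^*$) is invertible by the assumption that $(A,E)$ has no eigenvalue on the unit circle, the two singular-vector relations unfold into the four coupled equations $\lambda E x = A x + B v$, $C x + D v = \gamma u$, $\eithetaconj E^* z = A^* z + C^* u$, and $B^* z + D^* u = \gamma v$, using that $G(\lambda)^* u = B^* z + D^* u$.

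Next I would eliminate $u$ and $v$. Since $\gamma$ is not a singular value of $D$, both $R$ and $S$ are invertible, so the two algebraic relations give $v = -R^{-1}(D^* C x + \gamma B^* z)$ and $u = -S^{-1}(\gamma C x + D B^* z)$. Substituting these into the two remaining equations and using the commutation identities $DR = SD$ — whence $DR^{-1} = S^{-1}D$ and $D^* S^{-1} = R^{-1} D^*$ — which follow immediately from $R = D^*D - \gamma^2 I$ and $S = DD^* - \gamma^2 I$, reduces them to
\beq
	\lambda E x = (A - BR^{-1}D^*C) x - \gamma B R^{-1} B^* z
	\quad\text{and}\quad
	\eithetaconj E^* z = (A - BR^{-1}D^*C)^* z - \gamma C^* S^{-1} C x.
\eeq
The crux is then to multiply the second equation by $\lambda$ and invoke $|\eitheta| = 1$, i.e.\ $\lambda \eithetaconj = 1$, turning it into $E^* z = \lambda\bigl[(A - BR^{-1}D^*C)^* z - \gamma C^* S^{-1} C x\bigr]$. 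Together with the first equation, this is precisely the block identity $\Md w = \lambda \Nd w$ for $w = \left[\begin{smallmatrix} x \\ z \end{smallmatrix}\right]$, so $\eitheta$ is an eigenvalue of $\MNpendisc$ with eigenvector $w$.

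For the forward direction it remains to verify $w \neq 0$: if $x = z = 0$, the four coupled equations collapse to $B v = 0$, $D v = \gamma u$, $C^* u = 0$, and $D^* u = \gamma v$, exhibiting $\gamma$ as a singular value of $D$ and contradicting the hypothesis; hence $w \neq 0$. For the converse, given an eigenvector $w = \left[\begin{smallmatrix} x \\ z \end{smallmatrix}\right] \neq 0$ of $\MNpendisc$ at $\lambda = \eitheta$, the two block rows reproduce the reduced equations above (dividing the second by $\lambda \neq 0$ and again using $\lambda\eithetaconj = 1$); defining $u$ and $v$ by the same formulas and reversing the algebra recovers $G(\lambda) v = \gamma u$ and $G(\lambda)^* u = \gamma v$. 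Finally, if $u$ and $v$ were both zero then equations $\lambda E x = Ax + Bv$ and $\eithetaconj E^* z = A^*z + C^*u$ would force $x = z = 0$ by invertibility, contradicting $w \neq 0$; since $\gamma > 0$ forces $u = 0 \iff v = 0$, both are nonzero and $\gamma$ is a singular value of $G(\eitheta)$.

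The routine part is the block algebra and verifying $DR = SD$, but the genuine obstacle — and the only place the discrete case departs from the continuous one — is the conjugate frequency $\eithetaconj$ appearing in the adjoint-state equation. It is the unit-modulus relation $\lambda\eithetaconj = 1$ that converts it back to $\lambda$, and this is exactly what dictates the asymmetric placement of $E, E^*$ and $(A-BR^{-1}D^*C), (A-BR^{-1}D^*C)^*$ across $\Md$ and $\Nd$, in contrast to the symmetric $\Nc = \diag(E, E^*)$ of the continuous-time pencil; keeping careful track of which block carries the factor $\lambda$ is where an error would most easily creep in.
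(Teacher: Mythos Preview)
Your proof is correct and follows essentially the same approach as the paper's: both introduce the same auxiliary vectors (your $x,z$ are the paper's $q,s$), eliminate $u,v$ via the invertibility of $R,S$, and then multiply the adjoint-state equation by $\eitheta$ to exploit $\lambda\eithetaconj=1$, which is exactly the paper's left-multiplication by $\diag(I,-\eitheta I)$. The only cosmetic difference is that the paper carries out the elimination via the explicit block inverse $\left[\begin{smallmatrix}-D & \gamma I\\ \gamma I & -D^*\end{smallmatrix}\right]^{-1}$ whereas you solve for $u,v$ componentwise using $DR=SD$; your treatment of the nondegeneracy conditions ($w\neq 0$, $u,v\neq 0$) is in fact slightly more detailed than the paper's.
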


\begin{proof}
Let $\gamma$ be a singular value of $G(\eitheta)$ with left and right singular vectors $u$ and $v$,
that is, so that $G(\eitheta)v = \gamma u$ and $G(\eitheta)^*u = \gamma v$.  
Using the expanded versions of these two equivalences 
\beq
	\label{eq:tfsv_equiv_disc}
	\left( \tfs{\eitheta} \right) v 	= \gamma u 
	\quad \text{and} \quad 
	\left( \tfs{\eitheta} \right)^* u  = \gamma v,
\eeq
we define 
\beq
	\label{eq:qs_disc}
	q = \left( \eitheta E - A \right)^{-1}Bv 
	\quad \text{and} \quad 
	s = \left( \eithetaconj E^* - A^* \right)^{-1}C^*u.
\eeq
Rewriting \eqref{eq:tfsv_equiv_disc} using \eqref{eq:qs_disc} yields the following matrix equation:
\beq
	\label{eq:uv_disc}
	\begin{bmatrix} C & 0 \\ 0 & B^* \end{bmatrix} 
	\begin{bmatrix} q \\ s \end{bmatrix} 
	= 
	\begin{bmatrix} -D & \gamma I \\ \gamma I & -D^* \end{bmatrix} 
	\begin{bmatrix} v \\ u \end{bmatrix} 
	~\Longrightarrow~
	\begin{bmatrix} v \\ u \end{bmatrix} 
	= 
	\begin{bmatrix} -D & \gamma I \\ \gamma I & -D^* \end{bmatrix}^{-1}
	\begin{bmatrix} C & 0 \\ 0 & B^* \end{bmatrix} 
	\begin{bmatrix} q \\ s \end{bmatrix} ,
\eeq
where
\beq
	\label{eq:Dgamma_inv}
	\begin{bmatrix} -D & \gamma I \\ \gamma I & -D^* \end{bmatrix}^{-1}
	= 
	\begin{bmatrix} -R^{-1}D^* & -\gamma R^{-1} \\ -\gamma S^{-1} & -DR^{-1} \end{bmatrix}
	\quad \text{and} \quad
	\begin{bmatrix} q \\ s \end{bmatrix} \ne 0.
\eeq
Rewriting \eqref{eq:qs_disc} as:
\beq
	\label{eq:EAqs1_disc}
	\left(
	\begin{bmatrix} \eitheta E & 0 \\ 0 & \eithetaconj E^* \end{bmatrix} -
	\begin{bmatrix} A & 0 \\ 0 & A^* \end{bmatrix}
	\right)
	\begin{bmatrix} q \\ s \end{bmatrix}
	= 
	\begin{bmatrix} B & 0 \\ 0 & C^* \end{bmatrix} 
	\begin{bmatrix} v \\ u \end{bmatrix} ,	
\eeq
and then substituting in \eqref{eq:uv_disc} for the rightmost term of \eqref{eq:EAqs1_disc} yields
\beq
	\label{eq:EAqs2_disc}
	\setlength\arraycolsep{3pt}
	\begingroup
	\left(
	\begin{bmatrix} \eitheta E & 0 \\ 0 & \eithetaconj E^* \end{bmatrix}  -
	\begin{bmatrix} A & 0 \\ 0 & A^* \end{bmatrix} 
	\right)
	\begin{bmatrix} q \\ s \end{bmatrix} 
	= 
	\begin{bmatrix} B & 0 \\ 0 & C^* \end{bmatrix} 
	\begin{bmatrix} -D & \gamma I \\ \gamma I & -D^* \end{bmatrix}^{-1}
	\begin{bmatrix} C & 0 \\ 0 & B^* \end{bmatrix} 
	\begin{bmatrix} q \\ s \end{bmatrix} .
	\endgroup
\eeq
Multiplying the above on the left by 
\[
	\begin{bmatrix} I & 0 \\ 0 & -\eitheta I \end{bmatrix}
\]
and then rearranging terms, we have
\beqs
	\label{eq:EAqs3_disc}
	\eitheta \begin{bmatrix} E & 0 \\ 0 & A^* \end{bmatrix}  
	\begin{bmatrix} q \\ s \end{bmatrix} 
	=
	\begin{bmatrix} A & 0 \\ 0 & E^* \end{bmatrix} 
	\begin{bmatrix} q \\ s \end{bmatrix} 
	+ 
	\begin{bmatrix} B & 0 \\ 0 & -\eitheta C^* \end{bmatrix}
	\begin{bmatrix} -D & \gamma I \\ \gamma I & -D^* \end{bmatrix}^{-1}
	\begin{bmatrix} C & 0 \\ 0 & B^* \end{bmatrix} 
	\begin{bmatrix} q \\ s \end{bmatrix} .
\eeqs
Substituting the inverse in \eqref{eq:Dgamma_inv} for its explicit form and multiplying terms yields:
\beqs
	\eitheta \begin{bmatrix} E & 0 \\ 0 & A^* \end{bmatrix}  
	\begin{bmatrix} q \\ s \end{bmatrix} 
	=
	\begin{bmatrix} A & 0 \\ 0 & E^* \end{bmatrix} 
	\begin{bmatrix} q \\ s \end{bmatrix} 
	+ 
	\begin{bmatrix} B & 0 \\ 0 & -\eitheta C^* \end{bmatrix}  
	\begin{bmatrix} -R^{-1}D^*C  & -\gamma R^{-1}B^* \\
		 -\gamma S^{-1}C & -DR^{-1}B^* \end{bmatrix} 
	\begin{bmatrix} q \\ s \end{bmatrix} .
\eeqs
Finally, multiplying terms further, separating out the $-\eitheta$ terms to bring them
over to the left hand side, and then recombining, we have that 
\beqs
	\eitheta \begin{bmatrix} E & 0 \\ -\gamma C^*S^{-1}C & A^* - C^*DR^{-1}B^* \end{bmatrix}  
	\begin{bmatrix} q \\ s \end{bmatrix} 
	=
	\begin{bmatrix} A -  BR^{-1}D^*C & -\gamma BR^{-1}B^* \\ 0 & E^* \end{bmatrix}
	\begin{bmatrix} q \\ s \end{bmatrix} .
\eeqs
It is now clear that $\eitheta$ is an eigenvalue of pencil $\MNpendisc$.

Now suppose that $\eitheta$ is an eigenvalue of pencil $\MNpendisc$ with eigenvector
given by $q$ and $s$ as above.  Then it follows that \eqref{eq:EAqs2_disc} holds, which can
be rewritten as \eqref{eq:EAqs1_disc} by defining $u$ and $v$ using the right-hand side equation of \eqref{eq:uv_disc}, noting that neither can be identically zero.  It is then clear that the pair of equivalences in \eqref{eq:qs_disc} hold.  Finally, substituting \eqref{eq:qs_disc} into the left-hand side equation of \eqref{eq:uv_disc}, it is clear that $\gamma$ is a singular value 
of $G(\eitheta)$, with left and right singular vectors $u$ and $v$.
\end{proof}

Adapting Algorithm~\ref{alg:hybrid_opt} to the discrete-time case is straightforward.  First, all instances of $\ntfc(\omega)$ 
must be replaced with 
\[
	\ntfd(\theta) \coloneqq \|G(\eitheta)\|_2.
\]
To calculate its first and second derivatives, we will need  
the first and second derivatives of $\tfd(\theta) \coloneqq G(\eitheta)$ and for notational brevity, it will
be convenient to define $Z(\theta) \coloneqq (\eitheta E - A)$.  Then
\beq
	\label{eq:tfd1st}
	\tfd^\prime(\theta) = - \imagunit \eitheta C Z(\theta)^{-1} E Z(\theta)^{-1} B
\eeq
and 
\beq
	\label{eq:tfd2nd}
	\tfd^{\prime\prime}(\theta) = 
		\eitheta C Z(\theta)^{-1} E Z(\theta)^{-1} B 
		- 2 e^{2\imagunit \theta} C Z(\theta)^{-1} E Z(\theta)^{-1} E Z(\theta)^{-1} B.
\eeq
The first derivative of $\ntfd(\theta)$ can thus be calculated using 
\eqref{eq:ntfcprime_defn_cont}, where $\omega$, $\ntfc(\omega)$, and $\tfc^\prime(\omega)$ are replaced
by $\theta$, $\ntfd(\theta)$, and $\tfd^\prime(\theta)$ using \eqref{eq:tfd1st}.  The second derivative
of $\ntfd(\theta)$ can be calculated using Theorem~\ref{thm:eig2ndderiv} using \eqref{eq:tfd1st}
and \eqref{eq:tfd2nd} to define $H(\theta)$, the analog of \eqref{eq:eigderiv_mat}.
Line~\ref{algline:pencil} must be changed
to instead compute the eigenvalues of unit modulus of \eqref{eq:MNpencil_disc}.  Line~\ref{algline:frequencies} must instead
index and sort the angles $\{\theta_1,\ldots,\theta_l\}$ of these unit modulus eigenvalues in ascending order.
Due to the periodic nature of \eqref{eq:hinf_disc}, line~\ref{algline:hyopt_ints} must additionally consider the 
``wrap-around" interval
$[\theta_l,\theta_0+2\pi]$.

\section{Numerical experiments}
\label{sec:numerical}
We implemented Algorithm~\ref{alg:hybrid_opt} in \matlab, for both continuous-time and discrete-time cases.  
Since we can only get timing information from \texttt{hinfnorm} and we wished to verify that our new method
does indeed reduce the number of times the eigenvalues of $\MNpencont$ 
and $\MNpendisc$ are computed, 
we also designed our code so that it can run just using the standard BBBS algorithm or the cubic-interpolant scheme.  
For our new optimization-based approach, we used \texttt{fmincon} for both the unconstrained optimization 
calls needed for the initialization phase and for the box-constrained optimization calls needed in the convergent phase;
\texttt{fmincon}'s optimality and constraint tolerances were set to $10^{-14}$ in order to find maximizers to near machine
precision. Our code supports starting the latter optimization calls from either the midpoints of the BBBS algorithm 
\eqref{eq:gamma_mp} or the maximizing frequencies calculated from the cubic-interpolant method 
\eqref{eq:gamma_cubic}.  Furthermore, the optimizations may be done using either
 the secant method (first-order information only) or with Newton's method using second derivatives, thus leading 
 to four variants of our proposed method to test.  Our code has a user-settable parameter that determines 
 when $m,p$ should be considered too large relative to $n$, and thus when it is likely that using secant method  
 will actually be faster than Newton's method, due to the additional expense of computing the second derivative
 of the norm of the transfer function.
  
For initial frequency guesses, our code simply tests zero and the imaginary part of the 
rightmost eigenvalue of $(A,E)$, excluding eigenvalues that are either infinite, uncontrollable, or unobservable.  
Eigenvalues are deemed uncontrollable or unobservable if $\|B^*y\|_2$ or $\|Cx\|_2$ are respectively below a user-set 
tolerance, where $x$ and $y$ are respectively the right and left eigenvectors for a given eigenvalue of $(A,E)$.  
In the discrete-time case, the default initial guesses are zero, $\pi$, and the angle for the largest modulus eigenvalue.\footnote{
For producing a production-quality implementation, see \cite{BruS90} for more sophisticated initial guesses that can be used, 
\cite[Section III]{BenV11} for dealing with testing properness of the transfer function, and \cite{Var90a} for filtering out 
uncontrollable/unobservable eigenvalues of $(A,E)$ when it has index higher than one.}
 
For efficiency of implementing our method and conducting these experiments, 
our code does not yet take advantage of structure-preserving eigensolvers.  Instead,
it uses the regular QZ algorithm (\texttt{eig} in \matlab) to compute the eigenvalues of $\MNpencont$ and $\MNpendisc$.
To help mitigate issues due to rounding errors, we 
consider any eigenvalue $\lambda$ imaginary or of unit modulus
if it lies within a margin of width $10^{-8}$ on either side of the 
imaginary axis or unit circle.
Taking the imaginary parts of these nearly imaginary eigenvalues forms the 
initial set of candidate frequencies, or the angles of these nearly unit modulus eigenvalues for the discrete-time case.  
Then we simply form all the consecutive intervals, including the wrap-around interval for the discrete-time case, even though not all of them will be level-set intervals, and some intervals may only be a portion of a level-set interval (e.g. if the use of QZ causes spurious candidate frequencies).
The reason we do this is is because we can easily sort which of the intervals at height $\gamma$ 
are below $\ntfc(\omega)$ or $\ntfd(\omega)$ just by evaluating these functions at 
the midpoint or the maximizer of the cubic interpolant for each interval.
This is less expensive because we need to evaluate these interior points regardless,
so also evaluating the norm of the transfer function at all these endpoints just
adds additional cost.
However, for the cubic interpolant refinement, we nonetheless still evaluate 
$\ntfc(\omega)$ or $\ntfd(\omega)$ at the endpoints since we need the corresponding derivatives there
to construct the cubic interpolants; we do not use the eigenvectors 
of $\MNpencont$ or $\MNpendisc$
to bypass this additional cost as \texttt{eig} in \matlab\ does not 
currently provide a way to only compute selected eigenvectors, i.e. those corresponding 
to the imaginary (unit-modulus) eigenvalues.
Note that while this strategy is sufficient for our experimental comparisons here, 
it certainly does not negate the need for structure-preserving eigenvalue solvers.

We evaluated our code on several continuous- and discrete-time problems up to 
moderate dimensions, all listed with dimensions in Table~\ref{table:hinfnorm_tol}.
For the continuous-time problems, we chose four problems 
from the test set used in \cite{GugGO13} (\texttt{CBM}, \texttt{CSE2}, \texttt{CM3}, \texttt{CM4}),
two from the SLICOT benchmark examples\footnote{Available at \url{http://slicot.org/20-site/126-benchmark-examples-for-model-reduction}}
(\texttt{ISS} and \texttt{FOM}), and 
two new randomly-generated examples using \texttt{randn()}
with a relatively large number of inputs and outputs. 
Notably, the four problems from \cite{GugGO13} were generated
via taking open-loop systems from \compleib\ \cite{compleib}
and then designing controllers to minimize the $\Hinf$ norm of the 
corresponding closed-loop systems via \hifoo\ \cite{BurHLetal06}.
Such systems can be interesting benchmark examples
because $\ntfc(\omega)$ will often have several peaks,
and multiple peaks may attain the value of the $\Hinf$ norm,
or at least be similar in height.
Since the discrete-time problems from \cite{GugGO13} were 
all very small scale (the largest order in that test set is only 16) and SLICOT only offers a single discrete-time benchmark example,
we instead elected to take additional open-loop systems from \compleib\ and obtain usable 
test examples by minimizing the discrete-time $\Hinf$ norm of their respective closed-loop 
systems, via optimizing controllers using \hifood\ \cite{PopWM10}, 
a fork of \hifoo\ for discrete-time systems.
On all examples, the $\Hinf$ norm values computed by our local-optimization-enhanced code (in all its variants) agreed on average to 13 digits with the results provided by \texttt{hinfnorm}, when used with the tight tolerance of $10^{-14}$, with the worst discrepancy being only 11 digits of agreement.
However, our improved method often found slightly larger values, i.e. more accurate values, since it
optimizes $\ntfc(\omega)$ and $\ntfd(\omega)$ directly.

All experiments were performed using \matlab\ R2016b running on a Macbook Pro with an Intel i7-6567U dual-core CPU, 16GB of RAM, and Mac OS X v10.12.

\subsection{Continuous-time examples}

\begin{table}[!t]
\center
\begin{tabular}{ l | cccc | cc } 
\toprule
\multicolumn{7}{c}{Small-scale examples (continuous time)}\\
\midrule
\multicolumn{1}{c}{} & 
	\multicolumn{4}{c}{Hybrid Optimization} & \multicolumn{2}{c}{Standard Algs.} \\
\cmidrule(lr){2-5}
\cmidrule(lr){6-7}
\multicolumn{1}{c}{} &
	\multicolumn{2}{c}{Newton} & \multicolumn{2}{c}{Secant} & \multicolumn{2}{c}{} \\
\cmidrule(lr){2-3}
\cmidrule(lr){4-5}
\multicolumn{1}{l}{Problem} &
	\multicolumn{1}{c}{Interp.} & \multicolumn{1}{c}{MP} & \multicolumn{1}{c}{Interp.} & \multicolumn{1}{c}{MP} &
    	\multicolumn{1}{c}{Interp.} & \multicolumn{1}{c}{BBBS} \\
\midrule	
\multicolumn{7}{c}{Number of Eigenvalue Computations of $\MNpencont$}\\
\midrule
\texttt{CSE2}    & 2 & 3 & 1 & 1 & 2 & 3 \\ 
\texttt{CM3}     & 2 & 3 & 2 & 2 & 3 & 5 \\ 
\texttt{CM4}     & 2 & 2 & 2 & 2 & 4 & 6 \\ 
\texttt{ISS}     & 1 & 1 & 1 & 1 & 3 & 4 \\ 
\texttt{CBM}     & 2 & 2 & 2 & 2 & 5 & 7 \\ 
\texttt{randn 1}   & 1 & 1 & 1 & 1 & 1 & 1 \\ 
\texttt{randn 2}   & 1 & 1 & 1 & 1 & 2 & 2 \\ 
\texttt{FOM}     & 1 & 1 & 1 & 1 & 2 & 2 \\ 
\midrule	
\multicolumn{7}{c}{Number of Evaluations of $\ntfc(\omega)$}\\
\midrule
\texttt{CSE2}    & 10 & 7 & 10 & 10 & 9 & 8 \\ 
\texttt{CM3}     & 31 & 26 & 53 & 45 & 31 & 24 \\ 
\texttt{CM4}     & 19 & 17 & 44 & 43 & 46 & 36 \\ 
\texttt{ISS}     & 12 & 12 & 22 & 22 & 39 & 27 \\ 
\texttt{CBM}     & 34 & 28 & 59 & 55 & 46 & 36 \\ 
\texttt{randn 1}   & 1 & 1 & 1 & 1 & 1 & 1 \\ 
\texttt{randn 2}   & 4 & 4 & 17 & 17 & 6 & 4 \\ 
\texttt{FOM}     & 4 & 4 & 16 & 16 & 7 & 5 \\ 
\bottomrule
\end{tabular}
\caption{The top half of the table reports the number of times the eigenvalues of 
$\MNpencont$ were computed in order to compute the $\Hinf$ norm
to near machine precision.  From left to right, the methods are our hybrid optimization
approach using Newton's method and the secant method, 
the cubic-interpolant scheme (column `Interp')
and the standard BBBS method, all implemented by our single configurable code.
The subcolumns `Interp.' and 'MP' of our methods respectively indicate that the optimization 
routines were initialized at the points from the cubic-interpolant scheme and the BBBS midpoint scheme.
The bottom half of the table reports the number of times it was necessary to evaluate the
norm of the transfer function (with or without its derivatives).
The problems are listed in increasing order of their state-space sizes $n$; 
for their exact dimensions, see Table~\ref{table:hinfnorm_tol}.
}
\label{table:evals_dense}
\end{table}

In Table~\ref{table:evals_dense}, we list the number of times the eigenvalues of
$\MNpencont$ were computed and the number of evaluations of $\ntfc(\omega)$ for our 
new method compared to our implementations of the existing BBBS algorithm and its interpolation-based refinement.  
As can be seen, our new method typically limited the number of required eigenvalue computations of 
$\MNpencont$ to just two, and often it only required one (in the cases where our method 
found a global optimizer of $\ntfc(\omega)$ in the initialization phase).  In contrast, the standard 
BBBS algorithm and its interpolation-based refinement had to evaluate the eigenvalues 
$\MNpencont$ more times; for example, on problem
\texttt{CBM}, the BBBS algorithm needed seven evaluations while its interpolation-based refinement still needed five.
Though our new method sometimes required more evaluations of $\ntfc(\omega)$ 
than the standard algorithms, often the number of evaluations of $\ntfc(\omega)$
was actually less with our new method, presumably due its fewer iterations and 
particularly when using the Newton's method variants.
Even when our method required more evaluations of $\ntfc(\omega)$ than the standard methods,
the increases were not too significant (e.g. the secant method variants of our method on problems \texttt{CM4}, \texttt{CBM}, \texttt{randn 2}, and \texttt{FOM}).
Indeed, the larger number of evaluations of $\ntfc(\omega)$
when employing the secant method in lieu of Newton's method
was still generally quite low.

\begin{table}[!t]
\setlength{\tabcolsep}{3pt}
\robustify\bfseries
\center
\begin{tabular}{ l | SSSS | SS | SS } 
\toprule
\multicolumn{9}{c}{Small-scale examples (continuous time)} \\
\midrule
\multicolumn{1}{c}{} & 
	\multicolumn{4}{c}{Hybrid Optimization} & \multicolumn{2}{c}{Standard Algs.} & \multicolumn{2}{c}{\texttt{hinfnorm($\cdot$,tol)}}\\
\cmidrule(lr){2-5}
\cmidrule(lr){6-7}
\cmidrule(lr){8-9}
\multicolumn{1}{c}{} &
	\multicolumn{2}{c}{Newton} & \multicolumn{2}{c}{Secant} & \multicolumn{2}{c}{} & \multicolumn{2}{c}{\texttt{tol}}\\
\cmidrule(lr){2-3}
\cmidrule(lr){4-5}
\cmidrule(lr){8-9}
\multicolumn{1}{l}{Problem} &
	\multicolumn{1}{c}{Interp.} & \multicolumn{1}{c}{MP} & \multicolumn{1}{c}{Interp.} & \multicolumn{1}{c}{MP} &
    	\multicolumn{1}{c}{Interp.} & \multicolumn{1}{c}{BBBS} &
    	\multicolumn{1}{c}{\texttt{1e-14}} & \multicolumn{1}{c}{\texttt{0.01}} \\
\midrule
\multicolumn{9}{c}{Wall-clock running times in seconds}\\
\midrule
\texttt{CSE2}    & 0.042 & 0.060 & 0.036 & 0.032 & 0.043 & \bfseries 0.031 & 0.137 & 0.022 \\ 
\texttt{CM3}     & \bfseries 0.125 & 0.190 & 0.198 & 0.167 & 0.170 & 0.167 & 0.148 & 0.049 \\ 
\texttt{CM4}     & \bfseries 0.318 & 0.415 & 0.540 & 0.550 & 0.712 & 0.811 & 1.645 & 0.695 \\ 
\texttt{ISS}     & 0.316 & 0.328 & 0.379 & \bfseries 0.303 & 0.709 & 0.757 & 0.765 & 0.391 \\ 
\texttt{CBM}     & 0.744 & \bfseries 0.671 & 1.102 & 1.071 & 1.649 & 1.757 & 3.165 & 1.532 \\ 
\texttt{randn 1}   & 0.771 & 0.868 & 1.006 & 0.871 & \bfseries 0.700 & 0.756 & 21.084 & 30.049 \\ 
\texttt{randn 2}   & \bfseries 9.551 & 9.746 & 9.953 & 11.275 & 14.645 & 15.939 & 31.728 & 16.199 \\ 
\texttt{FOM}     & \bfseries 3.039 & 3.426 & 4.418 & 4.176 & 5.509 & 5.182 & 128.397 & 36.529 \\ 
\midrule
\multicolumn{9}{c}{Running times relative to hybrid optimization (Newton with `Interp.')}\\
\midrule
\texttt{CSE2}    & 1 & 1.42 & 0.86 & 0.75 & 1.02 & 0.75 & 3.24 & 0.53 \\ 
\texttt{CM3}     & 1 & 1.52 & 1.59 & 1.34 & 1.36 & 1.34 & 1.19 & 0.39 \\ 
\texttt{CM4}     & 1 & 1.31 & 1.70 & 1.73 & 2.24 & 2.55 & 5.18 & 2.19 \\ 
\texttt{ISS}     & 1 & 1.04 & 1.20 & 0.96 & 2.24 & 2.39 & 2.42 & 1.24 \\ 
\texttt{CBM}     & 1 & 0.90 & 1.48 & 1.44 & 2.22 & 2.36 & 4.26 & 2.06 \\ 
\texttt{randn 1}   & 1 & 1.13 & 1.31 & 1.13 & 0.91 & 0.98 & 27.36 & 38.99 \\ 
\texttt{randn 2}   & 1 & 1.02 & 1.04 & 1.18 & 1.53 & 1.67 & 3.32 & 1.70 \\ 
\texttt{FOM}     & 1 & 1.13 & 1.45 & 1.37 & 1.81 & 1.71 & 42.25 & 12.02 \\ 
\midrule
Average          & 1 & 1.18 & 1.33 & 1.24 & 1.67 & 1.72 & 11.15 & 7.39 \\
\bottomrule
\end{tabular}
\caption{
In the top half of the table, the running times (fastest in bold) are reported 
in seconds for the 
same methods and configurations as in Table~\ref{table:evals_dense}, with
the running times of \texttt{hinfnorm} additionally listed in the rightmost two
columns, for a tolerance of $10^{-14}$ (as used by the other methods) and its default 
value of $0.01$.
The bottom of half the table normalizes all the times relative to the running times for our 
hybrid optimization method (Newton and `Interp.'), 
along with the overall averages relative to this variant.
}
\label{table:times_dense}
\end{table}

In Table~\ref{table:times_dense}, we compare the corresponding wall-clock times, and for convenience, 
we replicate the timing results of \texttt{hinfnorm} from Table~\ref{table:hinfnorm_tol} on
the same problems.
We observe that our new method was fastest on six out of the eight test problems, often significantly so. 
Compared to our own implementation of the BBBS algorithm, our new method was on average 1.72 times as fast and on three problems, 2.36-2.55 times faster.  
We see similar speedups compared to the cubic-interpolation refinement method as well.
Our method was even faster when compared to \texttt{hinfnorm}, which had the advantage 
of being a compiled code rather than interpreted like our code.  
Our new method was over eleven times faster than \texttt{hinfnorm} overall, but this was largely due to the two problems (\texttt{FOM} and \texttt{randn 1})
where our code was 27-42 times faster.  We suspect that this large performance gap on these problems 
was not necessarily due to
a correspondingly dramatic reduction in the number of times that the eigenvalues of $\MNpencont$
were computed but rather that the structure-preserving eigensolver \texttt{hinfnorm} employed sometimes 
has a steep performance penalty compared to standard QZ.  However, it is difficult to verify this as \texttt{hinfnorm}
is not open source.  We also see that for the variants of our method, there was about a 24-33\% percent penalty on average  
in the runtime when resorting to the secant method instead of Newton's method.
Nonetheless, even the slower secant-method-based 
version of our hybrid optimization approach was still typically much faster than BBBS or the cubic-interpolation scheme.   
The only exception to this was problem \texttt{CSE2}, where our secant method variants were
actually faster than our Newton's method variants; the reason for this was because during initialization,
the Newton's method optimization just happened to find worse initial local maximizers than 
the secant method approach, which led to more eigenvalue computations of $\MNpencont$.

The two problems where the variants of our new method were not fastest
were \texttt{CSE2} and \texttt{randn 1}.  However, for \texttt{CSE2},
our secant method variant using midpoints was essentially as fast as the standard algorithm.
As mentioned above, the Newton's method variants ended up being slower since 
they found worse initial local maximizers.
For \texttt{randn 1}, all methods only required a single evaluation of $\ntfc(\omega)$ and computing the eigenvalues of $\MNpencont$; in other words, their respective initial guesses were all actually a global maximizer.
As such, the differences in running times for \texttt{randn 1} seems likely attributed to the variability of interpreted \matlab\ code.

\subsection{Discrete-time examples}
We now present corresponding experiments for the six discrete-time examples listed in Table~\ref{table:hinfnorm_tol}.
In Table~\ref{table:evals_dense_disc},
we see that our new approach on discrete-time problems also reduces the number of expensive eigenvalue computations of $\MNpendisc$ compared to the standard methods
and that in the worst cases, there is only moderate increase in the number of evaluations of $\ntfd(\omega)$
and often, even a reduction, similarly as we saw in Table~\ref{table:evals_dense} for the continuous-time problems.

Wall-clock running times are reported in Table~\ref{table:times_dense_disc}, and show similar results, if not identical, to those in Table~\ref{table:times_dense} for the continuous-time comparison.
We see that our Newton's method variants are, on average, 1.66 and 1.41 times faster, respectively,
than the BBBS and cubic-interpolation refinement algorithms.
Our algorithms are often up to two times faster than these two standard methods
and were even up to 25.2 times faster on \texttt{ISS1d} compared to \texttt{hinfnorm} using \texttt{tol=1e-14}.
For three of the six problems, our approach was not fastest but these
three problems (\texttt{LAHd}, \texttt{BDT2d}, \texttt{EB6d}) 
also had the smallest orders among the discrete-time examples ($n=58,92,170$, respectively).
This underscores that our approach is likely most beneficial for all but rather small-scale problems,
where there is generally an insufficient cost gap between computing $\ntfd(\omega)$ 
and the eigenvalues of $\MNpendisc$.
However, for \texttt{LAHd} and \texttt{EB6d}, it was actually \texttt{hinfnorm}
that was fastest, where we are comparing a compiled 
code to our own pure \matlab\ interpreted code.
Furthermore, on these two problems, our approach was nevertheless not dramatically slower
than \texttt{hinfnorm} and for \texttt{EB6d}, was actually faster than our own implementation 
of the standard algorithms.
Finally, on \texttt{BDT2}, the fastest version of our approach essentially matched
the performance of our BBBS implementation, if not the cubic-interpolation refinement.

\begin{table}[!t]
\center
\begin{tabular}{ l | cccc | cc } 
\toprule
\multicolumn{7}{c}{Small-scale examples (discrete time)} \\
\midrule
\multicolumn{1}{c}{} & 
	\multicolumn{4}{c}{Hybrid Optimization} & \multicolumn{2}{c}{Standard Algs.} \\
\cmidrule(lr){2-5}
\cmidrule(lr){6-7}
\multicolumn{1}{c}{} &
	\multicolumn{2}{c}{Newton} & \multicolumn{2}{c}{Secant} & \multicolumn{2}{c}{} \\
\cmidrule(lr){2-3}
\cmidrule(lr){4-5}
\multicolumn{1}{l}{Problem} &
	\multicolumn{1}{c}{Interp.} & \multicolumn{1}{c}{MP} & \multicolumn{1}{c}{Interp.} & \multicolumn{1}{c}{MP} &
    	\multicolumn{1}{c}{Interp.} & \multicolumn{1}{c}{BBBS} \\
\midrule	
\multicolumn{7}{c}{Number of Eigenvalue Computations of $\MNpendisc$}\\
\midrule
\texttt{LAHd}    & 2 & 2 & 1 & 1 & 3 & 4 \\ 
\texttt{BDT2d}   & 2 & 3 & 2 & 2 & 3 & 4 \\ 
\texttt{EB6d}    & 1 & 1 & 2 & 1 & 3 & 5 \\ 
\texttt{ISS1d}   & 1 & 1 & 1 & 1 & 2 & 2 \\ 
\texttt{CBMd}    & 1 & 1 & 1 & 1 & 3 & 2 \\ 
\texttt{CM5d}    & 2 & 2 & 2 & 2 & 3 & 4 \\ 
\midrule	
\multicolumn{7}{c}{Number of Evaluations of $\ntfd(\omega)$}\\
\midrule
\texttt{LAHd}    & 13 & 11 & 24 & 24 & 17 & 15 \\ 
\texttt{BDT2d}   & 17 & 18 & 43 & 40 & 18 & 17 \\  
\texttt{EB6d}    & 22 & 22 & 37 & 34 & 32 & 32 \\ 
\texttt{ISS1d}   & 5 & 5 & 24 & 24 & 7 & 6 \\ 
\texttt{CBMd}    & 5 & 5 & 26 & 26 & 12 & 6 \\ 
\texttt{CM5d}    & 20 & 16 & 27 & 27 & 22 & 18 \\ 
\bottomrule
\end{tabular}
\caption{The column headers remain as described for Table~\ref{table:evals_dense}.
}
\label{table:evals_dense_disc}
\end{table}

\begin{table}[!t]
\setlength{\tabcolsep}{3pt}
\robustify\bfseries
\center
\begin{tabular}{ l | SSSS | SS | SS } 
\toprule
\multicolumn{9}{c}{Small-scale examples (discrete time)} \\
\midrule
\multicolumn{1}{c}{} & 
	\multicolumn{4}{c}{Hybrid Optimization} & \multicolumn{2}{c}{Standard Algs.} & \multicolumn{2}{c}{\texttt{hinfnorm($\cdot$,tol)}}\\
\cmidrule(lr){2-5}
\cmidrule(lr){6-7}
\cmidrule(lr){8-9}
\multicolumn{1}{c}{} &
	\multicolumn{2}{c}{Newton} & \multicolumn{2}{c}{Secant} & \multicolumn{2}{c}{} & \multicolumn{2}{c}{\texttt{tol}}\\
\cmidrule(lr){2-3}
\cmidrule(lr){4-5}
\cmidrule(lr){8-9}
\multicolumn{1}{l}{Problem} &
	\multicolumn{1}{c}{Interp.} & \multicolumn{1}{c}{MP} & \multicolumn{1}{c}{Interp.} & \multicolumn{1}{c}{MP} &
    	\multicolumn{1}{c}{Interp.} & \multicolumn{1}{c}{BBBS} &
    	\multicolumn{1}{c}{\texttt{1e-14}} & \multicolumn{1}{c}{\texttt{0.01}} \\
\midrule
\multicolumn{9}{c}{Wall-clock running times in seconds}\\
\midrule
\texttt{LAHd}    & 0.051 & 0.038 & 0.056 & 0.056 & 0.034 & 0.040 & \bfseries 0.031 & 0.015 \\ 
\texttt{BDT2d}   & 0.075 & 0.123 & 0.146 & 0.191 & \bfseries 0.057 & 0.076 & 0.070 & 0.031 \\ 
\texttt{EB6d}    & 0.271 & 0.312 & 0.409 & 0.296 & 0.469 & 0.732 & \bfseries 0.192 & 0.122 \\ 
\texttt{ISS1d}   & 0.654 & \bfseries 0.636 & 0.828 & 0.880 & 1.168 & 1.291 & 16.495 & 3.930 \\ 
\texttt{CBMd}    & 0.898 & \bfseries 0.795 & 0.999 & 1.640 & 2.015 & 1.420 & 1.411 & 0.773 \\ 
\texttt{CM5d}    & \bfseries 7.502 & 8.022 & 9.887 & 8.391 & 9.458 & 14.207 & 10.802 & 2.966 \\ 
\midrule
\multicolumn{9}{c}{Running times relative to hybrid optimization (Newton with `Interp.')}\\
\midrule
\texttt{LAHd}    & 1 & 0.74 & 1.09 & 1.11 & 0.67 & 0.78 & 0.60 & 0.29 \\ 
\texttt{BDT2d}   & 1 & 1.65 & 1.96 & 2.55 & 0.76 & 1.01 & 0.93 & 0.41 \\ 
\texttt{EB6d}    & 1 & 1.15 & 1.51 & 1.09 & 1.73 & 2.70 & 0.71 & 0.45 \\ 
\texttt{ISS1d}   & 1 & 0.97 & 1.27 & 1.34 & 1.79 & 1.97 & 25.21 & 6.01 \\ 
\texttt{CBMd}    & 1 & 0.89 & 1.11 & 1.83 & 2.24 & 1.58 & 1.57 & 0.86 \\ 
\texttt{CM5d}    & 1 & 1.07 & 1.32 & 1.12 & 1.26 & 1.89 & 1.44 & 0.40 \\ 
\midrule
Avg.         & 1 & 1.08 & 1.38 & 1.51 & 1.41 & 1.66 & 5.08 & 1.40 \\ 
\bottomrule
\end{tabular}
\caption{The column headers remain as described for Table~\ref{table:times_dense}.
}
\label{table:times_dense_disc}
\end{table}

\section{Local optimization for $\Hinf$ norm approximation}
\label{sec:hinf_approx}
Unfortunately, the $\bigO(n^3)$ work necessary to compute all the imaginary eigenvalues of $\MNpencont$
restricts the usage of the level-set ideas from \cite{Bye88,BoyBK89} to rather small-dimensional problems.
The same computational limitation of course also holds for obtaining all of the unit-modulus eigenvalues of $\MNpendisc$
in the discrete-time case.
Currently there is no known alternative technique that would guarantee convergence to a global maximizer of $\ntfc(\omega)$ 
or $\ntfd(\theta)$, to thus ensure exact computation of the $\Hinf$ norm, while also having more favorable scaling properties.
Indeed, the aforementioned scalable methods of \cite{GugGO13,BenV14,FreSV14,MitO16,AliBMetal17} 
for approximating the $\Hinf$ norm of large-scale systems
all forgo the expensive operation of computing all the eigenvalues of $\MNpencont$ and $\MNpendisc$, 
and consequently, the most that any of them can guarantee in terms of accuracy
is that they converge to a local maximizer of $\ntfc(\omega)$ or $\ntfd(\theta)$.
However, a direct consequence of our work here to accelerate the exact computation 
of the $\Hinf$ norm is that the straightforward application of optimization techniques to 
compute local maximizers of either $\ntfc(\omega)$ of $\ntfd(\theta)$
can itself be considered an efficient and scalable approach for approximating 
the $\Hinf$ norm of large-scale systems.
It is perhaps a bit staggering
that such a simple and direct approach seems to have been until now overlooked,
particularly given the sophistication of the existing $\Hinf$ norm approximation methods.

In more detail,
recall that the initialization phase of Algorithm~\ref{alg:hybrid_opt},
lines~\ref{algline:init_start}-\ref{algline:init_end}, 
is simply just applying unconstrained optimization to find one or more maximizers of
$\ntfc(\omega)$.
Provided that $(\imagunit \omega E - A)$ permits fast linear solves, 
e.g. a sparse LU decomposition,
there is no reason why this cannot also be done for large-scale systems.
In fact, the methods of \cite{BenV14,FreSV14,AliBMetal17} for approximating the $\Hinf$ norm 
all require that such fast solves are possible (while the methods of \cite{GugGO13,MitO16}
only require fast matrix-vector products with the system matrices).
When $m,p \ll n$, it is still efficient to calculate second derivatives of $\ntfc(\omega)$
to obtain a quadratic rate of convergence via Newton's method.
Even if $m,p \ll n$ does not hold, first derivatives of $\ntfc(\omega)$ can still be computed using sparse methods
for computing the largest singular value (and its singular vectors)
and thus the secant method can be employed to at least get superlinear convergence.
As such, the local convergence and superlinear/quadratic convergence rate guarantees of the existing methods
are at least matched by the guarantees of direct optimization.
For example, while the superlinearly-convergent method of \cite{AliBMetal17} requires that $m,p \ll n$,
our direct optimization approach remains efficient even if $m,p \approx n$, when it also has superlinear convergence,
and it has quadratic convergence in the more usual case of $m,p \ll n$.

Of course, there is also the question of whether there are differences in approximation quality between the methods.
This is a difficult question to address since beyond local optimality guarantees, there are no other theoretical results
concerning the quality of the computed solutions.  
Actual errors can only be measured when running the methods on small-scale systems, where the 
exact value of the $\Hinf$ norm can be computed,
while for large-scale problems, only relative differences between the methods' respective approximations
can be observed.  Furthermore, any of these observations may not be predictive of performance on other problems.  
For nonconvex optimization, 
the quality of a locally optimal computed solution is often dependent on the starting point,
which will be a strong consideration for the direct optimization approach.
On the other hand, it is certainly plausible that the sophistication of the existing $\Hinf$ norm algorithms
may favorably bias them to converge to better (higher) maximizers more frequently than 
direct optimization would, particularly if only randomly selected starting points were used.
With such complexities, in this paper we do not attempt to do a comprehensive benchmark with respect to existing $\Hinf$ 
norm approximation methods but only attempt to demonstrate that direct optimization is a potentially viable alternative.

We implemented lines~\ref{algline:init_start}-\ref{algline:init_end} of 
Algorithm~\ref{alg:hybrid_opt} in a second, standalone routine, 
with the necessary addition for the continuous-time case that 
the value of $\|D\|_2$ is returned if the computed local maximizers of $\ntfc(\omega)$ only yield lower function values than $\|D\|_2$. 
Since we assume that the choice of starting points will be critical, we initialized our sparse routine using starting frequencies 
computed by \texttt{samdp}, a \matlab\ code that implements the subspace-accelerated dominant pole algorithm of \cite{RomM06}.
Correspondingly, we compared our approach to the \matlab\ code \texttt{hinorm}, which implements
the spectral-value-set-based method using dominant poles of \cite{BenV14} and also uses \texttt{samdp} 
(to compute dominant poles at each iteration).
We tested \texttt{hinorm} using its default settings, and since it initially computes 20 dominant poles to find a good starting point,
we also chose to compute 20 dominant poles via \texttt{samdp} to obtain 20 initial frequency guesses for optimization.\footnote{Note 
that these are not necessarily the same 20 dominant poles, since \cite{BenV14} must first transform a system if the original system has nonzero $D$ matrix.}
Like our small-scale experiments, we also ensured zero was always included as an initial guess 
and reused the same choices for \texttt{fmincon} parameter values.
We tested our optimization approach by optimizing $\nopt=1,5,10$ of the most promising frequencies,
again using a serial MATLAB code.
Since we used LU decompositions to solve the linear systems, we tested our code in two configurations: with and without permutations, 
i.e. for some matrix given by variable \texttt{A}, \texttt{[L,U,p,q] = lu(A,'vector')} and \texttt{[L,U] = lu(A)}, respectively.

Table~\ref{table:problems_large} shows our selection or large-scale 
test problems, all continuous-time since \texttt{hinorm} does not support discrete-time problems (in contrast to our optimization-based approach which supports both).  Problems \texttt{dwave} and \texttt{markov} are from the large-scale test set used in \cite{GugGO13} while 
the remaining problems are freely available from the website of Joost Rommes\footnote{Available at \url{https://sites.google.com/site/rommes/software}}.  
As $m,p \ll n$ holds in all of these examples, we just present results
for our code when using Newton's method.
For all problems, our code produced $\Hinf$ norm approximations
that agreed to at least 12 digits with \texttt{hinorm}, meaning that the additional optimization calls done with $\nopt=5$ and 
$\nopt=10$ did not produce better maximizers than what was found with $\nopt=1$ and thus, only added to the serial computation running time.

\begin{table}[!t]
\centering
\begin{tabular}{ l | rrr | c  } 
\toprule
\multicolumn{5}{c}{Large-scale examples (continuous time)}\\
\midrule
\multicolumn{1}{l}{Problem} & 
	\multicolumn{1}{c}{$n$} & 
	\multicolumn{1}{c}{$p$} &
	\multicolumn{1}{c}{$m$} & 
	\multicolumn{1}{c}{$E=I$}  \\
\midrule
\texttt{dwave} & 2048 & 4 & 6 & Y \\
\texttt{markov} & 5050 & 4 & 6 & Y \\
\texttt{bips98\_1450} & 11305 & 4 & 4 & N \\
\texttt{bips07\_1693} & 13275 & 4 & 4 & N \\
\texttt{bips07\_1998} & 15066 & 4 & 4 & N \\
\texttt{bips07\_2476} & 16861 & 4 & 4 & N \\
\texttt{descriptor\_xingo6u} & 20738 & 1 & 6 & N \\
\texttt{mimo8x8\_system} & 13309 & 8 & 8 & N \\
\texttt{mimo28x28\_system} & 13251 & 28 & 28 & N \\
\texttt{ww\_vref\_6405} & 13251 & 1 & 1 & N \\
\texttt{xingo\_afonso\_itaipu} & 13250 & 1 & 1 & N \\
\bottomrule
\end{tabular}
\caption{The list of test problems for the large-scale $\Hinf$-norm approximation comparing
direct local optimization against \texttt{hinorm}, 
along with the corresponding problem dimensions and whether they are standard state-space systems ($E=I$) 
or descriptor systems $(E \ne I)$.}
\label{table:problems_large}
\end{table}

\begin{table}[!t]
\setlength{\tabcolsep}{3pt}
\robustify\bfseries
\center
\begin{tabular}{ l | SSS | SSS | S } 
\toprule
\multicolumn{8}{c}{Large-scale examples (continuous time)}\\
\midrule
\multicolumn{1}{c}{} & 
	\multicolumn{6}{c}{Direct optimization: $\nopt=1,5,10$ } & \multicolumn{1}{c}{\texttt{hinorm}} \\
\cmidrule(lr){2-7}
\cmidrule(lr){8-8}
\multicolumn{1}{c}{} & \multicolumn{3}{c}{\texttt{lu} with permutations} & \multicolumn{3}{c}{\texttt{lu} without permutations} & \multicolumn{1}{c}{}  \\
\cmidrule(lr){2-4}
\cmidrule(lr){5-7}
\multicolumn{1}{l}{Problem} & 
	\multicolumn{1}{c}{1} & \multicolumn{1}{c}{5} & \multicolumn{1}{c}{10} & 
	\multicolumn{1}{c}{1} & \multicolumn{1}{c}{5} & \multicolumn{1}{c}{10} & \multicolumn{1}{c}{} \\
\midrule	
\multicolumn{8}{c}{Wall-clock running times in seconds (initialized via \texttt{samdp})}\\
\midrule
\texttt{dwave}                   & 1.979 & 1.981 & 1.997 & 5.536 & 5.154 & 5.543 & \bfseries 1.861 \\ 
\texttt{markov}                  & \bfseries 3.499 & 3.615 & 3.593 & 26.734 & 26.898 & 27.219 & 3.703 \\ 
\texttt{bips98\_1450}            & \bfseries 6.914 & 8.333 & 10.005 & 14.559 & 17.157 & 19.876 & 31.087 \\ 
\texttt{bips07\_1693}            & \bfseries 8.051 & 9.155 & 11.594 & 18.351 & 21.367 & 24.322 & 75.413 \\ 
\texttt{bips07\_1998}            & \bfseries 10.344 & 11.669 & 13.881 & 50.059 & 56.097 & 59.972 & 51.497 \\ 
\texttt{bips07\_2476}            & \bfseries 14.944 & 16.717 & 18.942 & 65.227 & 70.920 & 73.206 & 76.697 \\ 
\texttt{descriptor\_xingo6u}     & 13.716 & 15.328 & 16.997 & \bfseries 7.907 & 9.225 & 11.133 & 36.775 \\ 
\texttt{mimo8x8\_system}         & 7.566 & 8.934 & 11.211 & \bfseries 6.162 & 7.562 & 9.321 & 30.110 \\ 
\texttt{mimo28x28\_system}       & 12.606 & 17.767 & 20.488 & \bfseries 10.815 & 16.591 & 21.645 & 33.107 \\ 
\texttt{ww\_vref\_6405}          & 7.353 & 6.785 & 7.552 & \bfseries 4.542 & 5.076 & 5.437 & 18.553 \\ 
\texttt{xingo\_afonso\_itaipu}   & 5.780 & 6.048 & 7.772 & \bfseries 4.676 & 4.975 & 5.573 & 16.928 \\
\midrule
\multicolumn{8}{c}{Running times relative to direct optimization (\texttt{lu} with permutations and $\nopt =1$)}\\
\midrule
\texttt{dwave}                   & 1 & 1.00 & 1.01 & 2.80 & 2.60 & 2.80 & \bfseries 0.94 \\ 
\texttt{markov}                  & \bfseries 1 & 1.03 & 1.03 & 7.64 & 7.69 & 7.78 & 1.06 \\ 
\texttt{bips98\_1450}            & \bfseries 1 & 1.21 & 1.45 & 2.11 & 2.48 & 2.87 & 4.50 \\ 
\texttt{bips07\_1693}            & \bfseries 1 & 1.14 & 1.44 & 2.28 & 2.65 & 3.02 & 9.37 \\ 
\texttt{bips07\_1998}            & \bfseries 1 & 1.13 & 1.34 & 4.84 & 5.42 & 5.80 & 4.98 \\ 
\texttt{bips07\_2476}            & \bfseries 1 & 1.12 & 1.27 & 4.36 & 4.75 & 4.90 & 5.13 \\ 
\texttt{descriptor\_xingo6u}     & 1 & 1.12 & 1.24 & \bfseries 0.58 & 0.67 & 0.81 & 2.68 \\ 
\texttt{mimo8x8\_system}         & 1 & 1.18 & 1.48 & \bfseries 0.81 & 1.00 & 1.23 & 3.98 \\ 
\texttt{mimo28x28\_system}       & 1 & 1.41 & 1.63 & \bfseries 0.86 & 1.32 & 1.72 & 2.63 \\ 
\texttt{ww\_vref\_6405}          & 1 & 0.92 & 1.03 & \bfseries 0.62 &  0.69 & 0.74 & 2.52 \\ 
\texttt{xingo\_afonso\_itaipu}   & 1 & 1.05 & 1.34 &  \bfseries 0.81 &0.86 & 0.96 & 2.93 \\ 
\midrule
Average                          & 1 & 1.12 & 1.30 & 2.52 & 2.74 & 2.97 & 3.70 \\  
\bottomrule
\end{tabular}
\caption{In the top half of the table, the running times (fastest in bold) are reported 
in seconds for our direct 
Newton-method-based optimization approach in two configurations 
(\texttt{lu} with and without permutations)
and \texttt{hinorm}.  Each configuration of our approach optimizes
the norm of the transfer function for up to $\nopt$ different starting frequencies ($\nopt=1,5,10$),
done sequentially.
The bottom half the table normalizes all the times relative to the running times for our 
optimization method using 
\texttt{lu} with permutations and $\nopt=1$,
along with the overall averages relative to this variant.
}
\label{table:times_large}
\end{table}

In Table~\ref{table:times_large}, we present the running times of the codes and configurations.  First, we observe that 
for our direct optimization code, using \texttt{lu} with permutations
is two to eight times faster than without permutations;
on average, using \texttt{lu} with permutations is typically 2.5 times faster.  
Interestingly, on the last five problems, using \texttt{lu} without permutations 
was actually best, but using permutations 
was typically only about 25\% slower and at worse, about 1.7 times slower 
(\texttt{descriptor\_xingo6u}).  
We found that our direct optimization approach, using just one starting
frequency ($\nopt=1$) was typically 3.7 times faster than \texttt{hinorm} on average and almost up to 10 times faster on problem
\texttt{bips07\_1693}.  Only on problem \texttt{dwave} was direct optimization actually slower than \texttt{hinorm} 
and only by a negligible amount.  Interestingly, optimizing just one initial frequency versus running optimization
for ten frequencies ($\nopt=10$) typically only increased the total running time of our code by 20-30\%.  This strongly suggested
that the dominant cost of running our code is actually just calling \texttt{samdp} to 
compute the 20 initial dominant poles to obtain starting guesses.  As such, in Table~\ref{table:times_samdp},
we report the percentage of the overall running time for each variant/method that was due to their initial calls to \texttt{samdp}. 
Indeed, our optimization code's single call to \texttt{samdp} accounted for 81.5-99.3\% of its
running-time 
(\texttt{lu} with permutations and $\nopt=1$).  
In contrast, \texttt{hinorm}'s initial call to \texttt{samdp} usually
accounted for about only a quarter of its running time on average, excluding \texttt{dwave} and \texttt{markov} as exceptional
cases.
In other words, the convergent phase of direct optimization is actually even faster than the convergent phase of 
\texttt{hinorm} than what Table~\ref{table:times_large} appears to indicate.  On problem
\texttt{bips07\_1693}, we see that our proposal to use Newton's method to optimize 
$\ntfc(\omega)$ directly is actually over 53 times faster than \texttt{hinorm}'s convergent phase.

\begin{table}[!t]
\center
\begin{tabular}{ l | SSS | SSS | S } 
\toprule
\multicolumn{8}{c}{Large-scale examples (continuous time)}\\
\midrule
\multicolumn{8}{c}{Percentage of time just to compute 20 initial dominant poles (first call to \texttt{samdp})}\\
\midrule
\multicolumn{1}{c}{} & 
	\multicolumn{6}{c}{Direct optimization: $\nopt=1,5,10$} & \multicolumn{1}{c}{\texttt{hinorm}} \\
\cmidrule(lr){2-7}
\cmidrule(lr){8-8}
\multicolumn{1}{c}{} & \multicolumn{3}{c}{\texttt{lu} with permutations} & \multicolumn{3}{c}{\texttt{lu} without permutations} & \multicolumn{1}{c}{}  \\
\cmidrule(lr){2-4}
\cmidrule(lr){5-7}
\multicolumn{1}{l}{Problem} & 
	\multicolumn{1}{c}{1} & \multicolumn{1}{c}{5} & \multicolumn{1}{c}{10} & 
	\multicolumn{1}{c}{1} & \multicolumn{1}{c}{5} & \multicolumn{1}{c}{10} & \multicolumn{1}{c}{} \\
\midrule	
\texttt{dwave}                   & 99.3 & 99.3 & 99.2 & 99.3 & 99.6 & 99.6 & 98.0 \\ 
\texttt{markov}                  & 99.2 & 99.1 & 99.1 & 99.6 & 99.6 & 99.6 & 98.1 \\ 
\texttt{bips98\_1450}            & 84.1 & 72.7 & 60.4 & 84.9 & 73.0 & 61.8 & 21.7 \\ 
\texttt{bips07\_1693}            & 84.2 & 75.1 & 61.6 & 85.6 & 77.1 & 64.4 & 10.1 \\ 
\texttt{bips07\_1998}            & 85.2 & 77.1 & 66.0 & 88.5 & 81.5 & 72.8 & 18.3 \\ 
\texttt{bips07\_2476}            & 88.0 & 80.2 & 71.4 & 89.2 & 83.6 & 73.0 & 18.6 \\ 
\texttt{descriptor\_xingo6u}     & 90.2 & 82.8 & 75.0 & 89.3 & 79.9 & 66.8 & 38.4 \\ 
\texttt{mimo8x8\_system}         & 84.6 & 74.4 & 59.1 & 84.1 & 67.7 & 55.0 & 24.2 \\ 
\texttt{mimo28x28\_system}       & 81.5 & 59.4 & 51.2 & 75.5 & 46.9 & 39.1 & 40.9 \\ 
\texttt{ww\_vref\_6405}          & 95.1 & 79.6 & 63.9 & 91.9 & 78.3 & 75.1 & 33.7 \\ 
\texttt{xingo\_afonso\_itaipu}   & 90.2 & 84.4 & 76.8 & 89.3 & 83.7 & 74.7 & 33.6 \\ 
\bottomrule
\end{tabular}
\caption{The column headings are the same as in Table~\ref{table:times_large}.}
\label{table:times_samdp}
\end{table}

\section{Parallelizing the algorithms}
\label{sec:parallel}
The original BBBS algorithm, and the cubic-interpolation refinement, only 
provide little opportunity for parallelization
at the \emph{algorithmic} level, i.e. when not considering that
the underlying basic linear algebra operations may be parallelized themselves when running on a single shared-memory multi-core machine.
Once the imaginary eigenvalues\footnote{
For conciseness, our discussion in Section~\ref{sec:parallel} 
will be with respect to the continuous-time case
but note that it applies equally to the discrete-time case as well.}
have been computed,
constructing the level-set intervals 
(line~\ref{algline:bbbs_ints} of Algorithm~\ref{alg:bbbs})
and calculating $\ntfc(\omega)$ at their midpoints or cubic-interpolant-derived maximizers 
(line~\ref{algline:bbbs_points} of Algorithm~\ref{alg:bbbs})
can both be done in an embarrassingly parallel manner, e.g. across nodes on a cluster.
However, as we have discussed to motivate our improved algorithm, 
evaluating $\ntfc(\omega)$ is a rather cheap operation compared to computing the 
eigenvalues of $\MNpencont$.
Crucially, parallelizing these two steps does not 
result in an improved (higher) value of $\gamma$ found per iteration
and so the number of expensive eigenvalue computations of 
$\MNpencont$ remains the same.  

For our new method, we certainly can (and should) also parallelize the construction of the level-set intervals 
and the evaluations of their midpoints or cubic-interpolants-derived maximizers
(lines~\ref{algline:hyopt_ints} and \ref{algline:hyopt_points} in Algorithm~\ref{alg:hybrid_opt}),
despite that we do not expect large gains to be had here.
However, optimizing over the intervals 
(line~\ref{algline:hyopt_opt} in Algorithm~\ref{alg:hybrid_opt}) is also an embarrassingly parallel task
and here significant speedups can be obtained.
As mentioned earlier, with serial computation (at the algorithmic level), we typically recommend only optimizing 
over a single level-set interval ($\nopt=1$) out of the $q$ candidates 
(the most promising one, 
as determined by line~\ref{algline:hyopt_reorder} in Algorithm~\ref{alg:hybrid_opt});
otherwise, the increased number of evaluations of $\ntfc(\omega)$ can start to outweigh the benefits
of performing the local optimization.
By optimizing over more intervals in parallel, e.g. again across nodes on a cluster,
we increase the chances on every iteration
of finding even higher peaks of $\ntfc(\omega)$, and possibly a global maximum, 
\emph{without any increased time penalty} (besides communication latency).\footnote{Note that 
distributing the starting points for optimization and taking the max of the resulting optimizers 
involves very little data being communicated on any iteration.}
In turn, larger steps in $\gamma$ can be taken, potentially reducing 
the number of expensive eigenvalue computations of $\MNpencont$ incurred.
Furthermore, parallelization can also be applied to the initialization stage 
to optimize from as many starting points as possible without time penalty 
(lines~\ref{algline:init_start} and \ref{algline:init_opt} in Algorithm~\ref{alg:hybrid_opt}),
a standard technique for nonconvex optimization problems.
Finding a global maximum of $\ntfc(\omega)$ during initialization means that the algorithm 
will only need to compute the eigenvalues of $\MNpencont$
just once, to assert that maximum found is indeed a global one.

When using direct local optimization techniques for $\Hinf$ approximation,
as discussed in Section~\ref{sec:parallel},
optimizing from as many starting points as possible of course
also increases the chances of finding the true value of the $\Hinf$ norm,
or at least better approximations than just starting from one point.
With parallelization, 
these additional starting points can also be tried without any time penalty
(also lines~\ref{algline:init_start} and \ref{algline:init_opt} in Algorithm~\ref{alg:hybrid_opt}),
unlike the experiments we reported in Section~\ref{sec:hinf_approx}
where we optimized using $\phi=1,5,10$ starting guesses with a serial MATLAB code
and therefore incurred longer running times as $\phi$ was increased.

For final remarks on parallelization, first note that there will generally be less benefits when
using more than $n$ parallel optimization calls, since there are at most $n$ peaks of $\ntfc(\omega)$.
However, for initialization, one could simply try as many starting guesses as there are parallel nodes available
(even if the number of nodes is greater than $n$)
to maximize the chances of finding a high peak of $\ntfc(\omega)$ or a global
maximizer.  Second, the number of level-set intervals encountered
by the algorithm at each iteration may be significantly less than $n$, particularly if good starting 
guesses are used.  Indeed, it is not entirely uncommon for the algorithm to only
encounter one or two level-set intervals on each iteration.  
On the other hand, for applications where $\ntfc(\omega)$ has many similarly high
peaks, such as controller design where the $\Hinf$ norm is minimized,
our new algorithm may consistently benefit from parallelization with a higher number of parallel optimization calls.

\section{Conclusion and outlook}
\label{sec:wrapup}
We have presented an improved algorithm that significantly reduces
the time necessary to compute the $\Hinf$ norm of linear control systems 
compared to existing algorithms.
Furthermore, our proposed hybrid optimization approach
also allows the $\Hinf$ norm to be computed to machine precision with relatively little extra work, 
unlike earlier methods.  We have also demonstrated that 
approximating the $\Hinf$ norm of large-scale problems via directly optimizing the norm of the transfer function is not only viable but can be quite efficient. 
In contrast to the standard BBBS and cubic-interpolation refinement algorithms,
our new approaches for $\Hinf$ norm computation and approximation also can
benefit significantly more from parallelization.   
Work is ongoing to add implementations of our new algorithms 
to a future release of the open-source library ROSTAPACK: RObust STAbility PACKage.\footnote{Available 
at \url{http://www.timmitchell.com/software/ROSTAPACK}}
This is being done in coordination with our efforts to also add implementations of our new 
methods for computing the spectral value set abscissa and radius, proposed in \cite{BenM17a} and which use related ideas to those in this paper.
The current v1.0 release of ROSTAPACK
contains implementations of scalable algorithms for approximating all of these aforementioned measures \cite{GugO11,GugGO13,MitO16}, 
as well as variants where the uncertainties are restricted to be real valued \cite{GugGMetal17}.

Regarding our experimental observations, the
sometimes excessively longer compute times for \texttt{hinfnorm} compared to
all other methods we evaluated possibly indicates that the structure-preserving eigensolver
that it uses can sometimes be much slower than QZ.  This certainly warrants further investigation,
and if confirmed, suggests that optimizing the code/algorithm of the structure-preserving eigensolver  
could be a worthwhile pursuit.  In the large-scale setting, we have observed that the dominant cost 
for our direct optimization approach is actually due to obtaining the starting frequency guesses via  
computing dominant poles.  If the process of obtaining good initial guesses can be accelerated, then 
approximating the $\Hinf$ norm via direct optimization could be significantly sped up even more.

\section{Acknowledgements}
The authors are extremely grateful to both referees for their many useful comments toward improving the paper.

\small
\bibliographystyle{alpha}
\bibliography{csc,mor,software}  

\newcommand{\etalchar}[1]{$^{#1}$}
\begin{thebibliography}{GGMO17}

\bibitem[ABM{\etalchar{+}}17]{AliBMetal17}
N.~Aliyev, P.~Benner, E.~Mengi, P.~Schwerdtner, and M.~Voigt.
\newblock Large-scale computation of {$\mathcal{L}_\infty$}-norms by a greedy
  subspace method.
\newblock {\em {SIAM} J. Matrix Anal. Appl.}, 38(4):1496--1516, 2017.

\bibitem[Ant05]{morAnt05}
A.~C. Antoulas.
\newblock {\em Approximation of Large-Scale Dynamical Systems}, volume~6 of
  {\em Adv. Des. Control}.
\newblock {SIAM} Publications, Philadelphia, PA, 2005.

\bibitem[BB90]{BoyB90}
S.~Boyd and V.~Balakrishnan.
\newblock A regularity result for the singular values of a transfer matrix and
  a quadratically convergent algorithm for computing its {${L}_\infty$-norm}.
\newblock {\em Syst. Cont. Lett.}, 15:1--7, 1990.

\bibitem[BBK89]{BoyBK89}
S.~Boyd, V.~Balakrishnan, and P.~Kabamba.
\newblock A bisection method for computing the {$\mathcal{H}_{\infty}$} norm of
  a transfer matrix and related problems.
\newblock {\em Math. Control Signals Systems}, 2:207--219, 1989.

\bibitem[BBMX02]{BenBMetal02}
P.~Benner, R.~Byers, V.~Mehrmann, and H.~Xu.
\newblock Numerical computation of deflating subspaces of
  skew-{H}amiltonian/{H}amiltonian pencils.
\newblock {\em {SIAM} J. Matrix Anal. Appl.}, 24(1):165--190, 2002.

\bibitem[BCOW17]{morBenCOW17}
P.~Benner, A.~Cohen, M.~Ohlberger, and K.~Willcox.
\newblock {\em Model Reduction and Approximation: Theory and Algorithms}.
\newblock SIAM Publications, Philadelphia, PA, 2017.
\newblock ISBN: 978-1-611974-81-2.

\bibitem[BHLO06]{BurHLetal06}
J.~V. Burke, D.~Henrion, A.~S. Lewis, and M.~L. Overton.
\newblock {HIFOO} - {A} {MATLAB} package for fixed-order controller design and
  {$H_\infty$} optimization.
\newblock {\em IFAC Proceedings Volumes}, 39(9):339--344, 2006.
\newblock 5th IFAC Symposium on Robust Control Design ROCOND 2006.

\bibitem[BM17]{BenM17a}
P.~Benner and T.~Mitchell.
\newblock Extended and improved criss-cross algorithms for computing the
  spectral value set abscissa and radius.
\newblock e-print arXiv:1712.10067, arXiv, December 2017.
\newblock math.OC.

\bibitem[BP11]{BelP11}
M.~N. Belur and C.~Praagman.
\newblock An efficient algorithm for computing the {${H}_\infty$} norm.
\newblock {\em {IEEE} Trans. Autom. Control}, 56(7):1656--1660, 2011.

\bibitem[BS90]{BruS90}
N.~A. Bruinsma and M.~Steinbuch.
\newblock A fast algorithm to compute the {$H\sb{\infty}$}-norm of a transfer
  function matrix.
\newblock {\em Syst. Cont. Lett.}, 14(4):287--293, 1990.

\bibitem[BSV12]{BenSV12}
P.~Benner, V.~Sima, and M.~Voigt.
\newblock $\mathcal{L}_\infty$-norm computation for continuous-time descriptor
  systems using structured matrix pencils.
\newblock {\em {IEEE} Trans. Autom. Control}, 57(1):233--238, January 2012.

\bibitem[BSV16]{BenSV16}
P.~Benner, V.~Sima, and M.~Voigt.
\newblock Algorithm 961: {F}ortran 77 subroutines for the solution of
  skew-{H}amiltonian/{H}amiltonian eigenproblems.
\newblock {\em {ACM} Trans. Math. Software}, 42(3):Art. 24, 26, 2016.

\bibitem[BV11]{BenV11}
P.~Benner and M.~Voigt.
\newblock On the computation of particular eigenvectors of {H}amiltonian matrix
  pencils.
\newblock {\em Proc. Appl. Math. Mech.}, 11(1):753--754, 2011.

\bibitem[BV14]{BenV14}
P.~Benner and M.~Voigt.
\newblock A structured pseudospectral method for $\mathcal{H}_\infty$-norm
  computation of large-scale descriptor systems.
\newblock {\em Math. Control Signals Systems}, 26(2):303--338, 2014.

\bibitem[Bye88]{Bye88}
R.~Byers.
\newblock A bisection method for measuring the distance of a stable to unstable
  matrices.
\newblock {\em {SIAM} J. Sci. Statist. Comput.}, 9:875--881, 1988.

\bibitem[FSVD14]{FreSV14}
M.~A. Freitag, A.~Spence, and P.~Van~Dooren.
\newblock Calculating the {$H_\infty$}-norm using the implicit determinant
  method.
\newblock {\em {SIAM} J. Matrix Anal. Appl.}, 35(2):619--635, 2014.

\bibitem[GGMO17]{GugGMetal17}
N.~Guglielmi, M.~G\"urb\"uzbalaban, T.~Mitchell, and M.~L. Overton.
\newblock Approximating the real structured stability radius with
  {F}robenius-norm bounded perturbations.
\newblock {\em {SIAM} J. Matrix Anal. Appl.}, 38(4):1323--1353, 2017.

\bibitem[GGO13]{GugGO13}
N.~Guglielmi, M.~G\"urb\"uzbalaban, and M.~L. Overton.
\newblock Fast approximation of the {$H_\infty$} norm via optimization over
  spectral value sets.
\newblock {\em {SIAM} J. Matrix Anal. Appl.}, 34(2):709--737, 2013.

\bibitem[GO11]{GugO11}
N.~Guglielmi and M.~L. Overton.
\newblock Fast algorithms for the approximation of the pseudospectral abscissa
  and pseudospectral radius of a matrix.
\newblock {\em {SIAM} J. Matrix Anal. Appl.}, 32(4):1166--1192, 2011.

\bibitem[GV13]{GolV13}
G.~H. Golub and C.~F. {Van~Loan}.
\newblock {\em Matrix Computations}.
\newblock Johns Hopkins Studies in the Mathematical Sciences. Johns Hopkins
  University Press, Baltimore, fourth edition, 2013.

\bibitem[GVDV98]{GenVV98}
Y.~Genin, P.~Van~Dooren, and V.~Vermaut.
\newblock Convergence of the calculation of {$\mathcal{H}_\infty$}-norms and
  related questions.
\newblock In {\em Proceedings MTNS-98}, pages 429--432, July 1998.

\bibitem[HP05]{HinP05}
D.~Hinrichsen and A.~J. Pritchard.
\newblock {\em Mathematical Systems Theory I}.
\newblock Springer-Verlag, Berlin, 2005.

\bibitem[HS91]{HinS91}
D.~Hinrichsen and N.~K. Son.
\newblock Stability radii of linear discrete-time systems and symplectic
  pencils.
\newblock {\em Int. J. Robust Nonlinear Control}, 1:79--97, 1991.

\bibitem[HW99]{HeW99}
C.~He and G.~A. Watson.
\newblock An algorithm for computing the distance to instability.
\newblock {\em {SIAM} J. Matrix Anal. Appl.}, 20(1):101--116, 1999.

\bibitem[Kat82]{Kat82}
T.~Kato.
\newblock {\em A Short Introduction to Perturbation Theory for Linear
  Operators}.
\newblock Springer-Verlag, New York - Berlin, 1982.

\bibitem[Lan64]{Lan64}
P.~Lancaster.
\newblock On eigenvalues of matrices dependent on a parameter.
\newblock {\em Numer. Math.}, 6:377--387, 1964.

\bibitem[Lei04]{compleib}
F.~Leibfritz.
\newblock {$COMPl_{e}ib$}: \emph{CO}nstrained \emph{M}atrix-optimization
  \emph{P}roblem \emph{lib}rary -- a collection of test examples for nonlinear
  semidefinite programs, control system design and related problems.
\newblock See also: \url{http://www.compleib.de}, 2004.

\bibitem[MO16]{MitO16}
T.~Mitchell and M.~L. Overton.
\newblock Hybrid expansion-contraction: a robust scaleable method for
  approximating the {$H_\infty$} norm.
\newblock {\em {IMA} J. Numer. Anal.}, 36(3):985--1014, 2016.

\bibitem[OW95]{OveW95}
M.~L. Overton and R.~S. Womersley.
\newblock Second derivatives for optimizing eigenvalues of symmetric matrices.
\newblock {\em {SIAM} J. Matrix Anal. Appl.}, 16(3):697--718, 1995.

\bibitem[PWM10]{PopWM10}
A.~P. Popov, H.~Werner, and M.~Millstone.
\newblock Fixed-structure discrete-time $\mathcal{H}_\infty$ controller
  synthesis with {HIFOO}.
\newblock In {\em 49th IEEE Conference on Decision and Control (CDC)}, pages
  3152--3155, December 2010.

\bibitem[RM06]{RomM06}
J.~Rommes and N.~Martins.
\newblock Computing transfer function dominant poles of large-scale
  second-order dynamical systems.
\newblock {\em {IEEE} Trans. Power Syst.}, 21(4):1471--1483, November 2006.

\bibitem[SVDT95]{SreVT95}
J.~Sreedhar, P.~Van~Dooren, and A.~Tits.
\newblock A fast algorithm to compute the real structured stability radius.
\newblock In {\em Proc. Conf. Centennial Hurwitz on Stability Theory, Ticino
  (CH), May 21-26}, 1995.

\bibitem[Var90]{Var90a}
A.~Varga.
\newblock Computation of irreducible generalized state-space realizations.
\newblock {\em Kybernetika (Prague)}, 26(2):89--106, 1990.

\end{thebibliography}

\end{document}